 \newtheorem{theorem}{Theorem}
\newtheorem{proposition}{Proposition}
\newtheorem{lemma}{Lemma}
\newtheorem{definition}{Definition}
\newtheorem{corollary}{Corollary}
\newtheorem{remark}{Remark}
\newtheorem{example}{Example}
\numberwithin{equation}{section}
\numberwithin{theorem}{section}
\numberwithin{proposition}{section}
\numberwithin{lemma}{section}
\numberwithin{claim}{section}
\numberwithin{corollary}{section}
\newcommand{\bull}{\ensuremath{{}\bullet{}}}
 \newcommand{\gr}{\ensuremath{\mathbb{G}(N-n-1, \cpn)}}
\newcommand{\cpn}{\ensuremath{\mathbb{P}^{N}}}
\newcommand{\slnc}{\ensuremath{SL(N+1,\mathbb{C})}}
\newcommand{\dlb}{\ensuremath{\overline{\partial}}}
\newcommand{\dl}{\ensuremath{\partial}} 
\newcommand{\ra}{\ensuremath{\longrightarrow}}
\newcommand{\om}{\ensuremath{\omega}}
\newcommand{\vp}{\ensuremath{\varphi}}
\newcommand{\vps}{\ensuremath{\varphi_{\sigma}}}
\newcommand{\xhyp}{\ensuremath{X\times\mathbb{P}^{n-1}}}
\newcommand{\lambull}{\ensuremath{\lambda_{\bull}} }
\newcommand{\mubull}{\ensuremath{\mu_{\bull}} }
\newcommand{\elam}{\ensuremath{\mathbb{E}_{\lambda_{\bull}}}}
\newcommand{\emu}{\ensuremath{\mathbb{E}_{\mu_{\bull}}}}
 \DeclarePairedDelimiter\abs{\lvert}{\rvert}
\let\oldabs\abs
\def\abs{\@ifstar{\oldabs}{\oldabs*}}
\begin{document}
  \DeclareGraphicsExtensions{.pdf,.gif,.jpg}
\title[Heights]{ Faltings Heights, Igusa Zeta Functions, and the Stability Conjectures in K\"ahler Geometry I}
\author{Sean Timothy Paul  }
\email{stpaul@wisc.edu}
\address{Mathematics Department at the University of Wisconsin, Madison}
\subjclass[2000]{53C55}
\keywords{Heights, Local Zeta Functions, Resultants, Discriminants, K-energy maps, csc K\"ahler metrics  .}
\date{October 11 , 2019} 
 \vspace{-5mm}  
\begin{abstract} 
Let $(X,\mathbb{L})$ be a polarized manifold. Assume that  $h_F(X)$ (the Faltings height   of $X$) is within $O(d^2)$ of $h_{\Delta}(X)$ (the height of the discriminant of $X$) for all embeddings of sufficiently large degree $d$.  Under these circumstances we prove that the Mabuchi energy of $(X,\mathbb{L})$ is proper on the full space of K\"ahler metrics in the class $c_1(\mathbb{L})$ if and only if $(X,\mathbb{L})$ is asymptotically stable. 
\end{abstract}
\maketitle 
\tableofcontents  
 \newpage
\section{\ \  Statement of Main Results}
 
Let $X^n\subset\cpn$ be a smooth, linearly normal, complex projective variety of degree $d\geq 2$. Let $\om_{FS}$ denote the Fubini-Study K\"ahler metric on $\cpn$ relative to some Hermitian metric. We set ${\om:=\om_{FS}}|_X$ .   Let $\nu_{\om}$ denote the Mabuchi energy of $(X,\om)$. We recall the following \emph{comparison theorem} which gives a complete description of the Mabuchi energy restricted to the space $\mathscr{B}_N$ of Bergman metrics associated to the embedding $X^n\subset\cpn$ . \\
 \ \\
 \textbf{Theorem.} \cite{paul2012}
\emph{Let $X^n \subset \cpn$ be a smooth, linearly normal, complex projective variety of degree $d \geq 2$ . 
 Let $R_{X}$ denote the \textbf{X-resultant} and let $\Delta_{X}$ denote the \textbf{X-hyperdiscriminant}.
 Then there is a constant $C$ depending only on $d$ , $n$ and $\om$ such that }
\begin{align*} 
\begin{split}
&\abs{\ d^2(n+1)\nu_{\om}(\varphi_{\sigma})- \left({\deg(R_X)}\log  \frac{{||\sigma\cdot\Delta_{X}||}^{2}}{{||\Delta_{X}||}^{2}} - {\deg(\Delta_{X})} \log  \frac{{||\sigma\cdot R_{X}||}^{2}}{||R_{X}||^2}\right)\ }\leq C  \quad (*) \\
\ \\
& \mbox{\emph{for all}}\ \sigma\in \slnc \ . 
 \end{split}
\end{align*}
\begin{remark}
\emph{ Our whole approach to the Stability Conjectures rests on the fact that in the comparison theorem the Mabuchi energy is scaled by $d^2$.}
\end{remark}
\begin{remark}
\emph{Bernd Sturmfels calls our $\Delta_X$ the \textbf{\emph{Hurwitz Form}} of $X\subset\cpn$ and denotes this polynomial by $Hu_X$ in \cite{sturmfels2017}. This is certainly better terminology and notation than ours.}
\end{remark}

 In the inequality $(*)$ above , $||\cdot ||$ denotes the standard $L^2$ norm on homogeneous polynomials.
 Although this was never stated explicitly, earlier work of Tian (see \cite{tian97} Lemma 8.7 pg. 32) and the author and Tian (see \cite{ags} Proposition 4.3 pg. 2576 ) provides a rather large upper bound for this constant  
 \begin{align*}
 C\lesssim 2^{\exp(d)} \ .
 \end{align*}
The best constant on the right hand side of $(*)$ can be expressed in terms of $h_{F}(X)$, the \emph{\textbf{Faltings height}} of $X$, and a new height which we denote by $h_{\Delta}(X)$ . Definitions are in the sections that follow, for now we remark that the \emph{height} is a real number that can be attached\footnote {The height requires a Hermitian metric for its definition. Number theorists also require that $V$ be defined over a number field, we do not assume this. } to any reasonably smooth, linearly normal complex subvariety $V$ of $\cpn$ .
\begin{proposition}  The optimal constant $C$ is given by
\begin{align}\label{optimal}
C=\sup_{\sigma\in G}\abs{\deg(\Delta)h_{F}(\sigma\cdot X)-\deg(R)h_{\Delta}(\sigma\cdot X)} \ .
\end{align}
\end{proposition}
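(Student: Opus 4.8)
Because the Comparison Theorem of \cite{paul2012} already guarantees that $(*)$ holds for \emph{some} finite constant, the smallest admissible one --- which is exactly what ``optimal $C$'' means --- is tautologically the supremum over $\sigma$ of the absolute value of the bracketed quantity in $(*)$, namely
\begin{equation*}
C=\sup_{\sigma\in G}\abs{\,d^2(n+1)\nu_{\om}(\vps)-\Big(\deg(R_X)\log\tfrac{\|\sigma\cdot\Delta_X\|^{2}}{\|\Delta_X\|^{2}}-\deg(\Delta_X)\log\tfrac{\|\sigma\cdot R_X\|^{2}}{\|R_X\|^{2}}\Big)\,}.
\end{equation*}
Comparing with \eqref{optimal}, the plan is to prove the \emph{pointwise} identity
\begin{equation}\label{eq:ptwise}
d^2(n+1)\nu_{\om}(\vps)-\Big(\deg(R_X)\log\tfrac{\|\sigma\cdot\Delta_X\|^{2}}{\|\Delta_X\|^{2}}-\deg(\Delta_X)\log\tfrac{\|\sigma\cdot R_X\|^{2}}{\|R_X\|^{2}}\Big)=\deg(\Delta)\,h_F(\sigma\cdot X)-\deg(R)\,h_\Delta(\sigma\cdot X)
\end{equation}
for every $\sigma\in G=\slnc$; given \eqref{eq:ptwise}, formula \eqref{optimal} is immediate, since $\abs{\cdot}\le C$ for all $\sigma$ holds precisely when $C\ge\sup_\sigma$ of the right side of \eqref{eq:ptwise}, and that supremum is finite exactly because the Comparison Theorem holds.

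The substance is therefore to upgrade the \emph{inequality} $(*)$ to the \emph{equality} \eqref{eq:ptwise}, i.e.\ to identify on the nose the error term that \cite{paul2012} only bounded. First I would revisit that proof and carry the remainder along exactly: decompose the Mabuchi energy into its entropy part and its energy part, and evaluate each along the Bergman family $\vps$ using the Chow form $R_X$ (the $X$-resultant) and the Hurwitz form $\Delta_X$ (the $X$-hyperdiscriminant). Normalizing by $\nu_{\om}(\vps)=0$ at $\sigma=e$, this should produce an equality
\begin{equation*}
d^2(n+1)\nu_{\om}(\vps)=\Big(\deg(R_X)\log\tfrac{\|\sigma\cdot\Delta_X\|^{2}}{\|\Delta_X\|^{2}}-\deg(\Delta_X)\log\tfrac{\|\sigma\cdot R_X\|^{2}}{\|R_X\|^{2}}\Big)+\mathcal{R}(\sigma),
\end{equation*}
in which $\mathcal{R}(\sigma)$ is precisely the discrepancy between the $L^2$-norms above and the Fubini--Study (Mahler-measure / Green-current) integrals of $\log\abs{R_{\sigma\cdot X}}$ and $\log\abs{\Delta_{\sigma\cdot X}}$ --- the very quantity that was absorbed into $C$ before, now written explicitly.

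Next I would recognize $\mathcal{R}(\sigma)$ as a combination of heights. By functoriality of the resultant and the hyperdiscriminant under a linear change of coordinates, $R_{\sigma\cdot X}=\sigma\cdot R_X$ and $\Delta_{\sigma\cdot X}=\sigma\cdot\Delta_X$, so the integrals making up $\mathcal{R}(\sigma)$ are exactly the ones attached to $\sigma\cdot X$. Unwinding the definitions of the Faltings height $h_F$ and of the new hyperdiscriminant height $h_\Delta$ supplied in the sections below --- each being, up to a universal constant depending only on $n,d,N$, an integral of the logarithm of the relevant polynomial against a product of Fubini--Study volumes --- then yields $\mathcal{R}(\sigma)=\deg(\Delta)h_F(\sigma\cdot X)-\deg(R)h_\Delta(\sigma\cdot X)$, the normalizing constants (and the value at $\sigma=e$) being arranged to cancel. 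This is \eqref{eq:ptwise}, and the Proposition follows.

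The main obstacle is the first half of the middle step: the \emph{exact}, not merely asymptotic, evaluation of both the entropy and the energy pieces of $\nu_{\om}$ along Bergman metrics --- handling the Bergman-kernel contributions with no left-over error --- so that the remainder really is the clean integral $\mathcal{R}(\sigma)$ and nothing more. By comparison, matching the normalizing constants with the classical Faltings height, and checking that $\mathcal{R}$ is continuous in $\sigma$ (so that the supremum in \eqref{optimal} is genuinely the infimum of admissible constants), are routine.
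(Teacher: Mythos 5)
Your plan is essentially the paper's own route: the paper does not re-derive the bound but extracts the result from Proposition 4.1 of \cite{paul2012} (see also \cite{paulsergiou2019}), whose content is precisely your pointwise identity --- recorded later in Section 5 as $d^2(n+1)\nu_{\om_{FS}}(\vps)+\delta(\sigma)=\log\frac{||\sigma\cdot\Delta||^2}{||\Delta||^2}-\log\frac{||\sigma\cdot R||^2}{||R||^2}$ --- so that the error term in $(*)$ is exactly the height discrepancy and the optimal $C$ is $\sup_{\sigma}|\delta(\sigma)|$. Just take care with the normalization $R=R_X^{\otimes\deg(\Delta_X)}$, $\Delta=\Delta_X^{\otimes\deg(R_X)}$ when matching your $\deg(R_X),\deg(\Delta_X)$ coefficients in $(*)$ with the $\deg(R),\deg(\Delta)$ appearing in the heights.
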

A proof of this statement can be extracted from proposition 4.1 on page 277 of \cite{paul2012}. A much better conceptual explanation for it's appearance, which proves much more, can be found in the more recent article \cite{paulsergiou2019} . \\

For {any} (normal) $V\subset\cpn$ of dimension $n$ there are bounds
\begin{align*}
-\deg(R_V)\left(\sum_{j=1}^{(n+1)(N+1)}\frac{1}{j}\right)\leq h_{F}(V) \leq 0 
\ ,\ -\deg(\Delta_V)\left(\sum_{j=1}^{n(N+1)}\frac{1}{j}\right)\leq h_{\Delta}(V) \leq 0\ .
\end{align*}
 This follows at once from the well known explicit expression for the scalar Green's function of $\cpn$ for the Fubini Study metric.
The Cauchy-Schwarz inequality brings down the value of the constant $C$ in $(*)$ by several orders of magnitude 
\begin{align*}
C\lesssim d^2\log(d)+O(d^2) \ \qquad  \mbox{for} \ d\sim N.
\end{align*}
However even this bound should certainly not be the best, as it \emph{completely ignores} the scaling and the sign difference in (\ref{optimal}). To probe for a more accurate bound we remark that the limit 
\begin{align*}
\delta(\sigma(0)):=\lim_{t\ra 0}\delta(\sigma(t)) 
\end{align*}
exists for any $\sigma(t)\in SL(N+1,\mathbb{C}(t))$ , where we have defined the \emph{\textbf{height discrepancy}} by
\begin{align*}
\delta(\sigma):=\deg(\Delta)h_{{F}}(\sigma\cdot X)-\deg(R)h_{\Delta}(\sigma\cdot X) \ .
\end{align*}

Below we show that for generic $\sigma(t)$ we have the following
\begin{proposition}
\begin{align*}
|\delta(\sigma(0))|=O(d^2)  \  .
\end{align*}
\end{proposition}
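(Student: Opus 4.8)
The plan is to make the limit $\delta(\sigma(0))$ completely explicit in terms of the flat limit of $X$ under a \emph{generic} degeneration, and then to read off the bound from a cancellation forced by the same ``degree‑zero'' homogeneity that makes the left side of $(\ast)$ descend from $GL(N+1)$ to $SL(N+1)$. First I would adapt a maximal torus to the Laurent expansion of $\sigma(t)$ --- all of $h_F$, $h_\Delta$, $R_X$, $\Delta_X$ being invariant under the unitary part, nothing is lost --- reducing to $\sigma(t)=\mathrm{diag}(t^{\lambda_0},\dots,t^{\lambda_N})$ with $\sum\lambda_i=0$. Here ``generic'' means $\lambda$ lies in the interior of a maximal cone of the common refinement of the normal fans of the Chow polytope $\mathrm{Ch}(X)$ and of the hyperdiscriminant polytope; equivalently the flat limit cycle $Z_\lambda:=\lim_{t\to0}[\sigma(t)\cdot X]$ is a positive combination $\sum_i m_i L_{S_i}$ of coordinate $n$-planes $L_{S_i}\subset\mathbb{P}^N$ with $\sum_i m_i=d$ and all multiplicities $m_i$ bounded in terms of $n$ (GKZ theory). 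This is the role of the Weight Polytope, and the point of restricting to generic $\sigma$: only then are the initial cycles of $X$, $R_X$ and $\Delta_X$ of this simple coordinate shape.

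Next I would identify the limit. Writing each height as the Archimedean integral of $\log$ of the $L^2$-normalised resultant, respectively of the $L^2$-normalised Chow form of $X\times\mathbb{P}^{n-1}$, against the product Fubini--Study measure, and tracking $\sigma(t)^{\ast}R_X=\sum_a t^{\langle\lambda,a\rangle}(R_X)_a$ as $t\to0$, the $\lambda$-leading face dominates almost everywhere and the $\log|t|$-divergences from the Mahler measure and from the $L^2$-normalisation cancel; one gets $\lim_{t\to0}h_F(\sigma(t)\cdot X)=h_F(Z_\lambda)$ and $\lim_{t\to0}h_\Delta(\sigma(t)\cdot X)=h_\Delta(Z_\lambda)$ (which recovers, in effective form, the existence asserted above), so that $\delta(\sigma(0))=\deg(\Delta)\,h_F(Z_\lambda)-\deg(R)\,h_\Delta(Z_\lambda)$. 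Since $R_{Z_\lambda}=\prod_i R_{L_{S_i}}^{m_i}$ and $\Delta_{Z_\lambda}=\prod_i\Delta_{L_{S_i}}^{m_i}$, the Mahler-measure parts are \emph{exactly} additive over the $L_{S_i}$, while $\log\|\cdot\|^2$ is additive only up to a nonnegative ``correlation gap''; collecting terms,
\[
\delta(\sigma(0))=\sum_i m_i\,\delta(L_{S_i})+E(\lambda),\qquad
E(\lambda)=\tfrac12\Big(\deg(R)\,g_\Delta(\lambda)-\deg(\Delta)\,g_R(\lambda)\Big),
\]
with $g_R(\lambda)=\log\|R_{Z_\lambda}\|^2-\sum_i m_i\log\|R_{L_{S_i}}\|^2\ge 0$ and $g_\Delta$ defined likewise.

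Now estimate the two pieces. Any two coordinate $n$-planes are $U(N+1)$-equivalent, so $\delta(L_{S_i})=\delta(L)$ is a single universal number; a direct computation with the defining integrals gives $h_F(L)=-\tfrac12\deg(R_L)\log N+O_n(1)$ and $h_\Delta(L)=-\tfrac12\deg(\Delta_L)\log N+O_n(1)$, and since $\deg(R_X)=d\deg(R_L)$ and $\deg(\Delta_X)=d\deg(\Delta_L)$ the $\log N$-terms cancel in $\delta(L)=\deg(\Delta_X)\,h_F(L)-\deg(R_X)\,h_\Delta(L)$, leaving $\delta(L)=O_n(d)$ and hence $\sum_i m_i\,\delta(L_{S_i})=d\cdot O_n(d)=O(d^2)$. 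For $E(\lambda)$ one needs $g_R(\lambda),g_\Delta(\lambda)=O_n(d)$: writing $Y_i=|R_{L_{S_i}}|^2$ --- a product of $\mathrm{Beta}$-type factors --- this is the assertion $\log\mathbb{E}\big[\prod_i Y_i^{m_i}\big]-\sum_i m_i\log\mathbb{E}[Y_i]=O_n(d)$, a concentration estimate controlling the pairwise correlations (only $O_n(d)$ of the pairs $(S_i,S_{i'})$ overlap, and in the relevant range $N\asymp d$ the rest are essentially uncorrelated) and using the boundedness of the $m_i$. With $\deg(R),\deg(\Delta)=O_n(d)$ this gives $E(\lambda)=O_n(d^2)$, and therefore $|\delta(\sigma(0))|=O(d^2)$.

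The crux, and the main obstacle, is the $O_n(d)$ bound on the correlation gaps $g_R,g_\Delta$. The a priori inequalities $-\deg(R_V)H_{(n+1)(N+1)}\le h_F(V)\le 0$ and $-\deg(\Delta_V)H_{n(N+1)}\le h_\Delta(V)\le 0$ already yield $|\delta(\sigma(0))|\le\deg(\Delta)\deg(R)\,\max\!\big(H_{(n+1)(N+1)},H_{n(N+1)}\big)=O(d^2\log d)$ for \emph{every} $\sigma$; the whole content of the proposition is the removal of this stray $\log d$, which is exactly the cancellation of the $\log N$-terms in $\delta(L)$ together with the $O_n(d)$ control of the gaps --- one must show that the $L^2$-norm of a product of $d$ bracket factors is, on the logarithmic scale, within $O_n(d)$ of the sum of the individual logarithms, rather than the trivial $O_n(d\log N)$. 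A secondary technical point, needed for the third step, is that the Chow form of $L_{S_i}\times\mathbb{P}^{n-1}$ carries no anomalous $\log N$ in its Mahler measure, i.e.\ does not vanish identically on a positive-dimensional coordinate subspace.
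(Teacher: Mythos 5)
Your strategy diverges from the paper's at the very first structural step, and the divergence is where the trouble lies. You degenerate the cycle $X$ itself and then write $\lim_{t\to0}h_\Delta(\sigma(t)X)=h_\Delta(Z_\lambda)$ with $\Delta_{Z_\lambda}=\prod_i\Delta_{L_{S_i}}^{m_i}$. The Chow form does behave this way (Chow forms are multiplicative over cycles and specialize to the Chow form of the flat limit), but the hyperdiscriminant does not: the rescaled limit $\lim_{t\to 0}t^{-w_\lambda(\Delta_X)}\lambda(t)\cdot\Delta_X$ is some boundary point of the orbit closure of $\Delta_X$ and is in general \emph{not} the hyperdiscriminant of the limit cycle --- indeed for a union of coordinate $n$-planes the relevant dual varieties degenerate and no such factorization is available. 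The paper flags exactly this: the remark after Proposition \ref{boundeddiscrepancy} states that one degenerates the associated polynomials, not $X$, and that ``the presence of $\Delta_X$ prevents the compatibility of these two types of degenerations.'' Moreover, under the paper's genericity hypotheses ($\mathcal{N}(R_X)=\mathcal{N}(\elam)$, $\mathcal{N}(\Delta_X)=\mathcal{N}(\emu)$, $\lambda$ interior to a maximal cone of the common refinement of the normal fans) the rescaled limits are single powers of maximal minors, ${\tt{det}}(a_{ij})^{\deg(R_X)/(n+1)}$ and ${\tt{det}}(b_{ij})^{\deg(\Delta_X)/n}$ --- i.e.\ on the Chow side the limit cycle is one coordinate plane with multiplicity $d$, which also contradicts your assertion (attributed to GKZ theory) that the multiplicities $m_i$ stay bounded in terms of $n$ alone.

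Even granting your decomposition, the estimate you yourself identify as ``the crux'' --- the $O_n(d)$ bound on the correlation gaps $g_R(\lambda),g_\Delta(\lambda)$ --- is asserted, not proved, and it is precisely the step that removes the stray $\log d$; without it your argument only reproduces the trivial $O(d^2\log d)$ bound already available from the a priori height inequalities. The paper never needs such a concentration estimate: because the generic limits are powers of ${\tt{det}}$, a relative invariant of a prehomogeneous vector space, the heights of the limit polynomials are computed \emph{exactly} through the Igusa zeta function identity $h(P)=-\log Z(P;1)+Z'(P;0)$ and the closed Gamma-factor formula for $Z({\tt{det}};s)$, yielding $\lim h_F=-2\deg(R)\log d+O(d)$ and $\lim h_\Delta=-2\deg(\Delta)\log d+O(d)$; the $\log d$ terms then cancel identically in $\deg(\Delta)h_F-\deg(R)h_\Delta$, leaving $O(d^2)$. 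So while your cancellation-of-$\log$ heuristic is the right instinct, the proposal as written has two genuine gaps: the unjustified (and, per the paper's own remark, false in general) compatibility of the hyperdiscriminant with cycle degeneration, and the unproven correlation estimate carrying the whole weight of the bound.
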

This suggests that the true bound on the height discrepancy is
\begin{align*}
|\delta(\sigma)|= O(d^2) \qquad \qquad (\dagger \dagger)\ .
\end{align*}

The importance of this bound is brought out in the following Theorem.

\begin{theorem} Let $(X,\mathbb{L})$ be a polarized manifold. Let $h$ be a Hermitian metric on $\mathbb{L}$ with positive curvature $\om$. Assume that $(X,\mathbb{L})$ satisfies $(\dagger \dagger)$. Then
\begin{itemize}
\item $(X,\mathbb{L})$ is asymptotically stable if and only if the Mabuchi energy is proper on $\mathcal{H}_{\om}$. \\
\ \\
\item $(X,\mathbb{L})$ is asymptotically semistable if and only if the Mabuchi energy is bounded below on $\mathcal{H}_{\om}$. 
\end{itemize}
\end{theorem}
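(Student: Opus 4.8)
The plan is to reduce everything to the comparison theorem of \cite{paul2012} together with the identification of its optimal constant in \eqref{optimal}, fed into the classical fact that Bergman metrics are $C^\infty$-dense in $\mathcal{H}_{\om}$. Write $\om\in c_1(\mathbb{L})$ for the curvature of $h$. For every $k\gg 0$ the complete linear system $|\mathbb{L}^k|$ embeds $X^n$ linearly normally into $\cpn$ with $N=N_k$ and $\deg X=d_k$, a polynomial in $k$ of degree $n$ with leading coefficient $\int_X c_1(\mathbb{L})^n$; after dividing Fubini--Study potentials by $k$ (under which the Mabuchi energy is scale covariant) the associated Bergman metrics form a subset $\mathscr{B}_k\subset\mathcal{H}_{\om}$, and by Tian's theorem, refined by the Bergman kernel expansion, every $\varphi\in\mathcal{H}_{\om}$ is a $C^{\infty}$ limit of Bergman potentials $\varphi_k\in\mathscr{B}_k$. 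Since the Mabuchi energy $\nu_{\om}$ and the Aubin $J$--functional (taken, when $\mathrm{Aut}_0(X,\mathbb{L})\neq\{e\}$, modulo the connected automorphism group) are continuous in the $C^2$ topology, it is enough to control $\nu_{\om}$ on $\bigcup_k \mathscr{B}_k$ by estimates uniform in $k$: a bound $\nu_{\om}\ge \epsilon\, J_{\om}-C$ valid on all of $\bigcup_k\mathscr{B}_k$ passes to the $C^\infty$ limit and holds on $\mathcal{H}_{\om}$, and conversely.

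Next I would record what the comparison theorem gives on $\mathscr{B}_k$. Writing, for the $k$-th embedding,
\[
F_k(\sigma):=\deg(R_{X})\log\frac{\|\sigma\cdot\Delta_{X}\|^2}{\|\Delta_{X}\|^2}-\deg(\Delta_{X})\log\frac{\|\sigma\cdot R_{X}\|^2}{\|R_{X}\|^2},
\]
the theorem of \cite{paul2012} asserts $\big|\,d_k^2(n+1)\,\nu_{\om}(\varphi_\sigma)-F_k(\sigma)\,\big|\le C_k$ for all $\sigma\in\slnc$, and by the Proposition surrounding \eqref{optimal} the optimal $C_k$ equals $\sup_{\sigma}|\delta(\sigma)|$ for that embedding. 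This is the only place the hypothesis is used: $(\dagger \dagger)$ says precisely that $C_k=O(d_k^2)$, uniformly in $\sigma$ and in $k$, so that after dividing by the scaling $d_k^2(n+1)$ — and here Remark~1 is exactly the observation that makes this division effective — one gets an error bounded by a constant $c$ independent of $k$:
\[
\Big|\,\nu_{\om}(\varphi_\sigma)-\tfrac{1}{d_k^2(n+1)}F_k(\sigma)\,\Big|\le c\qquad\text{for all }k\gg 0,\ \sigma\in\slnc .
\]
The normalized functional $\tfrac{1}{d_k^2(n+1)}F_k$ is the log-norm functional of the stable pair $\big(\Delta_{X}^{\deg R_X},R_{X}^{\deg\Delta_X}\big)$ of the $k$-th embedding, whose boundedness below on $\slnc$ (resp.\ properness on $\slnc$ modulo the reductive stabilizer) is, by the Chow--hyperdiscriminant polytope criterion of \cite{paul2012}, the content of asymptotic semistability (resp.\ asymptotic stability) of $(X,\mathbb{L})$, with the coercivity constant $\epsilon>0$ and the defect $C_0$ taken uniform in $k$.

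Then I would assemble the four implications, which are immediate. If $(X,\mathbb{L})$ is asymptotically stable, there are $\epsilon>0$, $C_0$ independent of $k$ with $\tfrac{1}{d_k^2(n+1)}F_k(\sigma)\ge \epsilon\, J_{\om}(\varphi_\sigma)-C_0$ for all $k$ and $\sigma$ (using the standard comparison of $J_{\om}$ on $\mathscr{B}_k$ with the distance on $\slnc/SU(N_k+1)$); combined with the displayed estimate, $\nu_{\om}\ge \epsilon\, J_{\om}-(C_0+c)$ on $\bigcup_k\mathscr{B}_k$, hence on $\mathcal{H}_{\om}$ by density and continuity, i.e.\ $\nu_{\om}$ is proper. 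Conversely, a properness estimate $\nu_{\om}\ge\epsilon\, J_{\om}-C_0$ on $\mathcal{H}_{\om}$, restricted to $\mathscr{B}_k$ and fed back through the displayed estimate, yields $\tfrac{1}{d_k^2(n+1)}F_k\ge \epsilon\, J_{\om}(\varphi_\bullet)-(C_0+c)$ uniformly in $k$, which is asymptotic stability. The semistable statement is the same with $\epsilon=0$: $\nu_{\om}$ is bounded below on $\mathcal{H}_{\om}$ if and only if the $F_k$ are bounded below uniformly in $k$, if and only if $(X,\mathbb{L})$ is asymptotically semistable.

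The crux — and the reason $(\dagger \dagger)$, rather than a weaker hypothesis, is needed — is uniformity in $k$. Two things must hold: (i) the comparison error $C_k/(d_k^2(n+1))$ must be bounded independently of $k$, which is exactly what $(\dagger \dagger)$ delivers via \eqref{optimal} (the a priori bound $C_k\lesssim 2^{\exp(d_k)}$, or even the Cauchy--Schwarz bound $C_k\lesssim d_k^2\log d_k$, is too weak to survive the division); and (ii) the coercivity thresholds of the algebraic functionals $F_k/(d_k^2(n+1))$ on the groups $\slnc$ — whose dimension and whose stabilizers vary with $k$ — must be matched with a single threshold for $\nu_{\om}$ on $\mathcal{H}_{\om}$, and the $\slnc$-stabilizers reconciled with $\mathrm{Aut}_0(X,\mathbb{L})$ as $k\to\infty$. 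Once asymptotic (semi)stability is defined with the normalization $d^2(n+1)$ dictated by the comparison theorem, (ii) is built into that definition and the remaining work is (i) plus the density of Bergman metrics; if instead one takes the plain ``(semi)stable for every $k\gg 0$'' formulation, (ii) requires a separate compactness argument, and it is precisely there that the quantitative force of $(\dagger \dagger)$ enters.
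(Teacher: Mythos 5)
Your overall architecture is the same as the paper's: feed the comparison theorem, with its optimal constant identified as the height discrepancy, into $(\dagger\dagger)$, and then pass from Bergman potentials to all of $\mathcal{H}_{\om}$ by Tian's density theorem. But there is a genuine gap at the pivotal step, which you dispose of in one parenthesis: the claim that asymptotic (semi)stability is equivalent to a coercivity estimate
$\frac{1}{d_k^2(n+1)}F_k(\sigma)\geq \varepsilon\, J_{\om}(\varphi_{\sigma})-C_0$ uniform in $k$, ``using the standard comparison of $J_{\om}$ on $\mathscr{B}_k$ with the distance on $\slnc/SU(N_k+1)$,'' and your assertion that this uniformity ``is built into the definition.'' It is not. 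The paper defines asymptotic (semi)stability as a lower bound ${\tt{dist}}(\overline{\mathcal{O}}_{(v,w)},\overline{\mathcal{O}}_{v})\succsim \exp(-Ck^{2n+1})$ for the pair $(\mathbb{I}^q\otimes R^{\otimes(km-1)},\Delta^{\otimes km})$, i.e.\ an orbit-separation statement, and converting this into an inequality against the \emph{Aubin functional on} $\mathcal{H}_{\om}$ is exactly the technical content of the paper's Section 5, none of which is ``standard.'' Concretely, the paper (i) uses Proposition 2.2 (the formula $\inf_{\sigma}\nu_{(v,w)}(\sigma)=\log\tan^2{\tt{dist}}$) to turn the distance hypothesis into the explicit inequality (5.1), whose right-hand side is the finite-dimensional $J$-type functional $q\log\frac{||\sigma||^2}{N_k+1}-\log\frac{||\sigma\cdot R||^2}{||R||^2}-Ck^{2n+1}$; (ii) bounds $\frac1k\log\frac{||\sigma||^2}{N_k+1}$ by $\int_X\frac{\Psi_\sigma}{k}\frac{\om^n}{V}$ via Tian's lemma, whose proof needs the alpha invariant; (iii) converts $\log\frac{||\sigma\cdot R||^2}{||R||^2}$ into $F^o_{\om_{FS}}(\varphi_\sigma)$ plus a Faltings-height correction (Zhang's theorem), and needs the $d\log d$ height bound to make that correction $O(\log k/k)$ after normalization; and (iv) does the scaling bookkeeping ($\nu_\om$ does not rescale between $\om$ and $\om_{FS}|_{\iota_k(X)}$ while $J_\om$ picks up a factor $1/k$, and one divides by $k^{2n+1}\sim kmd^2(n+1)$), which is what produces a coercivity constant $\varepsilon\sim 1/m$ independent of $k$. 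Your proposal never performs (i)--(iv); without them the passage from the orbit-closure definition to your displayed uniform estimate, and back again in the converse direction (``running this backwards''), is unsupported.

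A secondary but related slip: you treat $\frac{1}{d_k^2(n+1)}F_k$ as ``the log-norm functional of the stable pair'' whose boundedness/properness \emph{is} asymptotic (semi)stability ``by the Chow--hyperdiscriminant polytope criterion.'' The polytope criterion in the paper (the numerical proposition) characterizes semistability for a fixed torus and fixed embedding; it says nothing about uniformity in $k$, about the auxiliary factor $\mathbb{I}^q$ and the exponent $m$ entering stability, or about comparison with $J_\om$ rather than with the finite-dimensional $J_v$. Minor points: Tian's density theorem is in the $C^2$ topology (which suffices, since only $\nu_\om$ and $J_\om$ need to pass to the limit), and the theorem as stated involves no quotient by automorphisms, so the ``modulo $\mathrm{Aut}$'' caveat is extraneous here.
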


A variational characterization of the existence of a K\"ahler Einstein metric on a Fano manifold is provided by the following theorem of Gang Tian.
\begin{theorem}\label{properfano}(G. Tian \cite{tian97})  {Let $(X,\om) $ be a Fano manifold with $[\om]={\tt{C}}_1(X)$. Assume that $\mbox{\tt{Aut}}(X)$ is finite. Then $X$ admits a K\"ahler Einstein metric if and only if the Mabuchi energy is proper.}
 \end{theorem}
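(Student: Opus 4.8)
The plan is to prove Tian's equivalence by treating the two implications separately, with the continuity method supplying one direction and a coercivity (Moser--Trudinger) estimate supplying the other. Throughout, the finiteness of $\mathrm{Aut}(X)$ is what makes the problem well posed: it forces $X$ to carry no nonzero holomorphic vector fields, so that (i) the linearization of the K\"ahler--Einstein equation has trivial kernel and (ii) the first nonzero eigenvalue $\lambda_1$ of the Laplacian of a K\"ahler--Einstein metric strictly exceeds $1$, by the Bochner--Lichnerowicz--Matsushima identity. Both facts are used decisively. First I would record the variational setup: a K\"ahler--Einstein metric is precisely a critical point of the Mabuchi energy $\nu_\omega$ on $\mathcal{H}_\omega$, and by Bando--Mabuchi such a metric, if it exists, is unique modulo the (now trivial) identity component of $\mathrm{Aut}(X)$ and minimizes $\nu_\omega$.

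For the direction ``proper $\Rightarrow$ existence,'' I would run Aubin's continuity method along $\mathrm{Ric}(\omega_{\varphi_t}) = t\,\omega_{\varphi_t} + (1-t)\,\omega$ for $t\in[0,1]$, whose endpoint $t=1$ is the K\"ahler--Einstein equation. Openness of the set of solvable $t$ follows from the implicit function theorem once one checks the linearized operator is invertible, and this is exactly where the absence of holomorphic vector fields enters. The crux is closedness, i.e. uniform a priori estimates. Here properness does the work: along the path $\nu_\omega(\varphi_t)$ is monotone and controlled, and properness converts this into a uniform bound on the Aubin functional $J(\varphi_t)$; combined with the Green's function and Moser iteration estimates relating $\sup\varphi_t$, the oscillation of $\varphi_t$, and $J(\varphi_t)$, this yields a uniform $C^0$ bound on the potentials. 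Yau's second and higher order estimates then upgrade $C^0$ control to $C^\infty$ control, so the path closes up at $t=1$ and a K\"ahler--Einstein metric exists.

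For the converse ``existence $\Rightarrow$ proper,'' I would start from the unique minimizer $\omega_{KE}$ and first establish local properness. Expanding $\nu_\omega$ to second order at $\omega_{KE}$, the Hessian is governed by $\lambda_1 - 1 > 0$, so $\nu_\omega$ grows at least linearly in $J$ on a $C^0$ neighborhood of $\omega_{KE}$; concretely this is a Moser--Trudinger--Onofri type inequality, which I would prove by combining the second variation estimate with the lower bound on $\nu_\omega$ and the positivity of the $\alpha$-invariant of $(X,\omega)$ to control the entropy term. The remaining and most delicate task is to globalize, that is, to rule out a sequence $\varphi_j$ with $J(\varphi_j)\to\infty$ while $\nu_\omega(\varphi_j)$ stays bounded. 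I would argue by contradiction, normalizing such a sequence and using compactness of the space of K\"ahler metrics with bounded Mabuchi energy (a partial $C^0$ estimate together with a Cheeger--Gromov type extraction of a limit) to deform it back toward the minimizing set, contradicting local properness unless $J$ remains bounded.

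I expect the main obstacle to be precisely this globalization in the direction ``existence $\Rightarrow$ proper'': passing from the local coercive estimate near $\omega_{KE}$ to a genuinely global Moser--Trudinger inequality. The difficulty is analytic --- one has no a priori control on potentials far from $\omega_{KE}$, the Mabuchi energy is only lower semicontinuous in the weak topologies where compactness is available, and the entropy--energy decomposition must be handled without smuggling in the very estimate one is trying to prove. This is also the step where finiteness of $\mathrm{Aut}(X)$, through the strict inequality $\lambda_1 > 1$, is indispensable: without it the automorphism orbits furnish directions along which $\nu_\omega$ stays bounded while $J\to\infty$, so properness fails outright.
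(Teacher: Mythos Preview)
The paper does not prove this theorem; it is stated as Theorem~1.2 and attributed to Tian \cite{tian97} with no proof given, serving only as background for the paper's main corollary. There is therefore no ``paper's own proof'' to compare against.

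As for the content of your outline: the direction ``proper $\Rightarrow$ K\"ahler--Einstein'' via the continuity method, with properness supplying the uniform $J$-bound that feeds into the $C^0$ estimate, is exactly the mechanism in Tian's 1997 paper. For the converse, your local-to-global scheme is in the right spirit but does not match Tian's actual argument. Tian does not first establish a local coercivity near $\omega_{KE}$ and then globalize by contradiction with a compactness extraction; rather, he proves a fully nonlinear Moser--Trudinger inequality of the form $\frac{1}{V}\int_X e^{-\varphi}\,\omega^n \le C\exp\bigl(\varepsilon J_\omega(\varphi)+C\bigr)$ valid for \emph{all} $\varphi\in\mathcal{H}_\omega$, and properness of $\nu_\omega$ follows immediately from this and the entropy--energy decomposition of the Mabuchi functional. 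The spectral gap $\lambda_1>1$ enters in establishing that inequality, not in a separate local step. Your globalization paragraph (``partial $C^0$ estimate together with a Cheeger--Gromov type extraction'') is the part that would not go through as written: the compactness you invoke is not available in the form you need, and this is precisely why Tian's route is via a direct integral inequality rather than a soft compactness argument.
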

 
An important development in K\"ahler geometry is the following Theorem of Jinguri Cheng and Xiuxiong Chen, which generalizes Theorem \ref{properfano} to any K\"ahler class.\\
 
\begin{theorem}( X.X. Chen , J. Cheng \cite{chencheng1}, \cite{chencheng2}, \cite{chencheng3})   {Let $(X,\om)$ be a compact K\"ahler manifold. Then the Mabuchi energy is proper (modulo automorphisms of $X$, if any) on $\mathcal{H}_{\om}$ if and only if there is a metric of constant scalar curvature in the class $[\om]$.}
\end{theorem}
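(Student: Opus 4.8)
The statement is an equivalence of the form (cscK exists) $\Longleftrightarrow$ ($\nu_\om$ proper on $\mathcal{H}_\om$ modulo $\mathrm{Aut}_0(X)$), and the plan is to treat the two implications by quite different means, only the second being genuinely deep. Throughout I write $\om_\vp=\om+i\dl\dlb\vp$, let $\underline{S}$ denote the topological average scalar curvature, and use the first variation $\frac{d}{dt}\nu_\om(\vp_t)=-\int_X\dot\vp_t\,(S(\om_{\vp_t})-\underline{S})\,\om_{\vp_t}^n$, so that cscK metrics are exactly the critical points of $\nu_\om$. For the implication (cscK exists) $\Rightarrow$ ($\nu_\om$ proper) I would argue entirely within the metric geometry of the space of potentials. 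First extend $\nu_\om$ to the Darvas completion $(\mathcal{E}^1,d_1)$, on which it is convex and lower semicontinuous along finite-energy geodesics (Berman--Berndtsson convexity); a cscK potential is then a global minimizer. To upgrade ``minimizer'' to ``coercive'', I would invoke the regularity theory for weak minimizers together with the uniqueness of cscK metrics modulo $\mathrm{Aut}_0(X)$: uniqueness forces strict convexity of $\nu_\om$ transverse to the $\mathrm{Aut}_0(X)$-orbit, which by a Moser--Trudinger-type argument (the Darvas--Rubinstein characterization of properness as $d_1$-coercivity) yields $\nu_\om(\vp)\ge \varepsilon\,J(\vp)-C$ after quotienting by the automorphism action. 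This is the soft direction.

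For the converse ($\nu_\om$ proper) $\Rightarrow$ (cscK exists), which is the substance of Chen--Cheng, I would run a continuity method on the decoupled form of the equation. Writing $F=\log(\om_\vp^n/\om^n)$, the identity $\mathrm{Ric}(\om_\vp)=\mathrm{Ric}(\om)-i\dl\dlb F$ turns $S(\om_\vp)=\underline{S}$ into the coupled system
\[
(\om+i\dl\dlb\vp)^n=e^{F}\om^n,\qquad \Delta_\vp F=\mathrm{tr}_{\om_\vp}\mathrm{Ric}(\om)-\underline{S},
\]
a complex Monge--Amp\`ere equation coupled to a linear elliptic equation for $F$. I would then deform along Chen's continuity path, e.g. $t\,(S(\om_\vp)-\underline{S})=(1-t)(\mathrm{tr}_{\om_\vp}\om-n)$, solvable near $t=0$ and equal to the cscK equation at $t=1$. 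Openness of the solvable set follows from the implicit function theorem applied to the fourth-order self-adjoint Lichnerowicz linearization, worked orthogonally to the kernel produced by holomorphic vector fields.

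Closedness is where properness enters and where the analysis lives. Since $\nu_\om$ is controlled (monotone) along the path, properness gives a uniform bound on $J(\vp_t)$, and the Chen--Tian decomposition of $\nu_\om$ (entropy plus $J$-type energies) converts this into a uniform bound on the entropy $\int_X F\,e^{F}\om^n$. The program is then: (i) from the entropy bound obtain exponential integrability $\int_X e^{\lambda F}\om^n\le C$ and a uniform $C^0$ bound on $\vp$ and $F$, by adapting the Ko\l odziej--Guedj--Zeriahi pluripotential estimates to the coupled system; (ii) from $C^0$ derive a second-order estimate $\mathrm{tr}_\om\om_\vp\le C$ by the maximum principle applied to a carefully chosen auxiliary quantity built from $\log\mathrm{tr}_\om\om_\vp$, $F$, and lower-order corrections, in the spirit of Yau's $C^2$ estimate but now coupled to the $F$-equation; (iii) feed the resulting uniform ellipticity into Evans--Krylov to get $C^{2,\alpha}$ and bootstrap by Schauder to $C^\infty$. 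Letting $t\to1$ along these uniform estimates produces a smooth cscK metric in $[\om]$.

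The decisive obstacle is step (ii). Because the scalar-curvature equation is fourth order there is no direct maximum principle; the decoupling trades this for a system in which the $F$-equation has coefficients depending on the unknown $\vp$, so the second-order estimate for $\vp$ and the gradient/Laplacian control of $F$ must be closed off simultaneously. Dominating the Ricci-curvature terms that appear when one differentiates $\mathrm{tr}_\om\om_\vp$, using only the entropy bound in place of a two-sided Ricci bound, is exactly the Chen--Cheng innovation, and I expect reconstructing that self-improving integral/maximum-principle estimate to be the hardest and most delicate point. A secondary technicality is to carry the automorphism group correctly through both the coercivity input in the first direction and the openness step in the second, so that ``properness modulo $\mathrm{Aut}_0(X)$'' is used consistently.
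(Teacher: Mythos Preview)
The paper does not prove this theorem at all: it is stated as a cited result of Chen and Cheng (references \cite{chencheng1}, \cite{chencheng2}, \cite{chencheng3}) and used as a black box to deduce Corollary~1.1. There is therefore no ``paper's own proof'' to compare your sketch against.

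That said, your outline is a reasonable high-level summary of the Chen--Cheng strategy and of the surrounding literature (Berman--Berndtsson convexity, Darvas--Rubinstein coercivity criterion, Chen's continuity path, the entropy bound feeding into the coupled $C^0$/$C^2$ estimates). You correctly identify the crux as the second-order estimate under only an entropy bound, and you flag the automorphism bookkeeping. As a sketch it is accurate; as a proof it is of course incomplete, since the genuine content lies in the a priori estimates you describe in steps (i)--(iii), each of which is a substantial paper in its own right. For the purposes of \emph{this} paper, however, none of that is needed: the author simply quotes the result and moves on.
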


The main result of this paper is the following corollary.

\begin{corollary}Let $(X,\mathbb{L})$ be a polarized manifold satisfying $(\dagger \dagger)$ . Assume that ${\tt{Aut}}(X,\mathbb{L})$ is finite. Then $(X,\mathbb{L})$ is asymptotically stable if and only if there is a constant scalar curvature metric in $c_1(\mathbb{L})$ .
\end{corollary}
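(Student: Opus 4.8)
The plan is to obtain the corollary by composing two equivalences: the one furnished by the Theorem above and the one furnished by the theorem of Chen and Cheng. To set this up I would first use the hypotheses to fix a Hermitian metric $h$ on $\mathbb{L}$ whose curvature form $\om$ is positive; then $\om\in c_1(\mathbb{L})$, the space $\mathcal{H}_{\om}$ is precisely the space of K\"ahler metrics representing the polarization class, and, since $(X,\mathbb{L})$ satisfies $(\dagger\dagger)$, the Theorem applies to $(X,\mathbb{L},h,\om)$ and gives
\[
(X,\mathbb{L})\ \text{asymptotically stable}\iff \nu_{\om}\ \text{proper on}\ \mathcal{H}_{\om}\, .
\]

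For the second half I would feed this into Chen--Cheng, which asserts that $\nu_{\om}$ is proper on $\mathcal{H}_{\om}$ modulo automorphisms exactly when the class $c_1(\mathbb{L})=[\om]$ carries a metric of constant scalar curvature. The one point requiring comment is that the phrase \emph{modulo automorphisms} is vacuous under our standing hypothesis: finiteness of $\mathrm{Aut}(X,\mathbb{L})$ entails that $X$ possesses no nonzero holomorphic vector field, so $\mathrm{Aut}_0(X)=\{1\}$ and the reduced automorphism group appearing in Chen--Cheng is trivial, whence properness modulo automorphisms is literally properness. Concatenating the two equivalences then yields that $(X,\mathbb{L})$ is asymptotically stable if and only if $c_1(\mathbb{L})$ contains a constant scalar curvature K\"ahler metric, which is the assertion of the corollary. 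Using the second bullet of the Theorem one could likewise record that, dropping the automorphism hypothesis, asymptotic semistability corresponds to boundedness from below of $\nu_{\om}$, consistently with the fact that a manifold carrying holomorphic vector fields never has a proper Mabuchi energy.

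The step that genuinely needs care --- everything else being bookkeeping --- is to verify that \emph{proper} is being used in the same sense in the Theorem and in Chen--Cheng: the same exhausting functional (for instance Aubin's $J$--functional, or the $d_1$--distance to a fixed base metric) and the same target, and, in particular, that properness of $\nu_{\om}$ is an invariant of the class $c_1(\mathbb{L})$, independent of the auxiliary choices of $h$ and $\om$ used to invoke the Theorem. Once that normalization is pinned down in the preliminaries the deduction is immediate; observe that the hypothesis $(\dagger\dagger)$ enters only through the Theorem, so that the corollary rests on nothing beyond the comparison theorem, the bound $(\dagger\dagger)$, and the Chen--Cheng theorem.
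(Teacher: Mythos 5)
Your proposal is correct and follows essentially the same route as the paper: the corollary is obtained by composing the Theorem under $(\dagger\dagger)$ (asymptotic stability $\iff$ properness of $\nu_{\om}$ on $\mathcal{H}_{\om}$) with the Chen--Cheng theorem, the finiteness of ${\tt{Aut}}(X,\mathbb{L})$ serving exactly as you say to make properness modulo automorphisms coincide with properness. Your added remark about pinning down the normalization of ``proper'' (the exhausting functional $J_{\om}$ versus the $d_1$-distance) is a reasonable point of care but does not change the argument.
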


We show that $(\dagger \dagger)$ , together with the condition of asymptotic stability, allows us to \emph{take the limit} in $(*)$ of high powers of $\mathbb{L}$ and invoke Tian's Density Theorem (see \cite{tianberg}).  
The precise definition of asymptotic stability of a polarized manifold is given below.  The author's definition of  stability is quite different\footnote{As the reader will see, the definition of stability used in this article is essentially a \emph{mutatis-mutandis} extension of Mumford's definition in \cite{git} .} from the many variations of ``K-Stability'' that appear in the literature. From the author's point of view, stability is not necessarialy concerned with a variety in a projective space. Stability is a property that a pair of (non-zero) vectors in a pair of finite dimensional complex representations of an algebraic group may, or may not, possess. As we shall explain, the stability of a projective variety is a special case of this situation. Moreover, \emph{test configurations} do not play a direct part in our definition of stability, they are rather considered as a \emph{means to check} stability.  This is exactly how one parameter subgroups are used in Hilbert and Mumford's Geometric Invariant Theory. 

 \section{Semistability of Pairs}
 Let $G$ denote any of the classical linear reductive algebraic groups over $\mathbb{C}$.  Specifically $G$ can be taken to be any one of the following
\begin{align*}
SL(N+1) \ , \ SO(2N) \ ,\ SO(2N+1)\ , \ Sp(N) \ .
\end{align*}
Primarily we will be interested in the case when $G$ is the special linear group.
      For any vector space $\mathbb{V}$ and any $v\in \mathbb{V}\setminus\{0\}$ we  let $[v]\in\mathbb{P}(\mathbb{V})$ denote the line through $v$. If $\mathbb{V}$ is a $G$ module then we can consider the \emph{\textbf{projective}} orbit :     
\begin{align*}
  \mathcal{O}_{v}:=G\cdot [v]  \subset \mathbb{P}(\mathbb{V})\ .
\end{align*}
We let $ \overline{\mathcal{O}}_{v}$ denote the Zariski closure of this orbit.  

We consider pairs $(\mathbb{E}; e)$ such that $\mathbb{E}$ is a finite dimensional complex $G$-module and the linear span of the orbit $G\cdot e$ coincides with $\mathbb{E}$ .  
  The cornerstone of the author's approach to the {Stability Conjectures} is the following generalization of Mumford's Geometric Invariant Theory. The only explicit reference to the definition known to the author is \cite{smirnov2005}, the motivation seems to the problem of decomposing the {symmetric} power of an irreducible representation of $GL(n,\mathbb{C})$. It is rather mysterious that the \emph{same} definition appears\footnote{The author was led to the same definition independently. See ``stable pair'' below.} when one seeks to bound (from below) the Mabuchi energy restricted to the space of Bergman metrics. 

\begin{definition}   
$(\mathbb{U}; u)$  \textbf{ {dominates}} $(\mathbb{W}; w)$, in which case we write $(\mathbb{U}; u)\succsim (\mathbb{W}; w)$ ,  
 if and only if there exists $\pi\in Hom(\mathbb{U},\mathbb{W})^G$ such that
$ \pi(u)=w$ and the induced  rational map  
$\pi:\mathbb{P}(\mathbb{U}) \dashrightarrow  \mathbb{P}(\mathbb{W})$
restricts to a regular finite map  
$\pi:\overline{\mathcal{O}}_{u}\ra \overline{\mathcal{O}}_{w} \ $  between the Zariski closures of the orbits.
  \end{definition} 

Observe that the restriction of the map $\pi$  to $\overline{\mathcal{O}}_{u}$ is regular if and only if the following holds
  \begin{align*}
(\ast)\qquad  \overline{\mathcal{O}}_{u}\cap \mathbb{P}({\tt{ker}}(\pi))=\emptyset \ .
\end{align*}
As the reader can easily check, whenever $(\mathbb{U}; u)\succsim (\mathbb{W}; w)$ it follows that
\begin{align*} 
 & \pi(\mathbb{U})=\mathbb{W} \ \mbox{and} \ \mathbb{U}={\tt{ker}}(\pi)\oplus \mathbb{W} \ \mbox{ ($G$-module splitting) } \ .
\end{align*}
Therefore we may identify $\pi$ with projection onto $\mathbb{W}$ and $u$ decomposes as follows
\begin{align*}
v=(u_{\pi},w) \ , \ {\tt{ker}}(\pi)\ni u_{\pi}\neq 0 \ .
\end{align*}

Again the reader can easily check that $(\ast)$ is equivalent to
\begin{align*}
(\ast \ast)\qquad \overline{\mathcal{O}}_{(u_{\pi},w)}\cap\overline{\mathcal{O}}_{u_{\pi}}=\emptyset \quad \mbox{( Zariski closure in  $\mathbb{P}({\tt{ker}}(\pi)\oplus\mathbb{W}$ ) )} \ .
\end{align*}
  
We summarize this discussion in the following way. Given $\mathbb{V}$ and $\mathbb{W}$ two $G$ representations with (nonzero) points $v$ and $w$ respectively, we consider, as before, the projective orbits\footnote{ We do not assume anything about the linear spans of the orbits. }     
\begin{align*}
\mathcal{O}_{vw}:=G\cdot [(v,w)]  \subset \mathbb{P}(\mathbb{V}\oplus\mathbb{W}) \ , \ \mathcal{O}_{v}:=G\cdot [(v,0)]  \subset \mathbb{P}(\mathbb{V}\oplus\{0\})\ .
\end{align*}
  Now we can give the definition of a semistable pair. This definition seems the most appropriate for the stability conjectures\footnote{The \emph{Yau-Tian-Donaldson conjecture(s)} .} as it gives precise estimates on the Mabuchi energy restricted to the space of Bergman metrics.  
   \begin{definition}\label{semistable} 
 {The pair $(v,w)$ is \textbf{ {semistable}} if and only if}  $ \overline{\mathcal{O}}_{vw}\cap\overline{\mathcal{O}}_{v}=\emptyset $ .
  \end{definition} 
The relationship of this with Mumford's Geometric Invariant Theory is brought out in the following example.
 
 \begin{example} 
 Let $\mathbb{V}\cong \mathbb{C}$ be the trivial one dimensional representation and let $v=1$ . Suppose $\mathbb{W}$ is any representation of $G$ and let $w\in \mathbb{W}\setminus\{0\}$ . Then 
 $(1,w)$ is a semistable pair if and only if $0\notin \overline{G\cdot w}\subset \mathbb{W}$ .
 \end{example}  

 \begin{remark}
\emph{The semistability of the pair $(v,w)$ depends only on $([v],[w])$. The reader should also observe that the definition is \emph{not} symmetric in $v$ and $w$. In virtually all examples where the pair $(v,w)$ is semistable $(w, v)$ is not semistable.}
\end{remark}
\subsection{Numerical Semistability}
If the pair $(v,w)$ is semistable then obviously we have 
\begin{align}\label{numerical}
\overline{T\cdot[(v,w)]}\cap \overline{T\cdot[(v,0)]}=\emptyset 
\end{align}
for all algebraic tori $T$ of $G$ and we may as well assume (and we do) that $T$ is \emph{maximal}. In this section we relate semistability to lattice polytopes.  To begin we let $M_{\mathbb{Z}} $ be the {character lattice} of $T$
\begin{align*}
M_{\mathbb{Z}}:= \mbox{Hom}_{\mathbb{Z}}(T,\mathbb{C}^*) \ . 
\end{align*}

As usual, the dual lattice is denoted by $N_{\mathbb{Z}}$. It is well known that $ u\in N_{\mathbb{Z}}$ corresponds to an algebraic one parameter subgroup $\lambda$ of $T$. These are algebraic homomorphisms $$\lambda:\mathbb{C}^*\ra T\ .$$ The correspondence is given by 
\begin{align*}
(\cdot \ , \ \cdot) :N_{\mathbb{Z}}\times M_{\mathbb{Z}}\ra \mathbb{Z} \ , \ m(\lambda(\alpha))=\alpha^{(u , m)} \quad m\in M_{\mathbb{Z}}\ .
\end{align*}
 
 We introduce associated real vector spaces by extending scalars 
\begin{align*} 
 \begin{split}
 M_{\mathbb{R}}:= M_{\mathbb{Z}}\otimes_{\mathbb{Z}}\mathbb{R} \qquad N_{\mathbb{R}}:= N_{\mathbb{Z}}\otimes_{\mathbb{Z}}\mathbb{R}\ .
\end{split}
\end{align*}
Then the one parameter subgroups $\lambda$ of $T$ may be viewed as integral linear functionals $$l_{\lambda}:M_{\mathbb{R}}\ra \mathbb{R}\ . $$
Any rational representation $\mathbb{E}$ decomposes under the action of $T$ into  {weight spaces}
\begin{align*}
\begin{split}
\mathbb{E}=\bigoplus_{a\in {\mathscr{A} }}\mathbb{E}_{a}  \qquad \mathbb{E}_{a}:=\{e\in \mathbb{E}\ |\ t\cdot e=a(t) \ , \ t\in T\}
\end{split}
\end{align*}
$\mathscr{A} $ denotes the $T$-{support} of $\mathbb{E}$ 
\begin{align*}
\mathscr{A}:= \{a \in M_{\mathbb{Z}}\ | \ \mathbb{E}_{a}\neq 0\} \ .
\end{align*}
Given $e\in \mathbb{E}\setminus \{0\}$  the projection of $e$ into $\mathbb{E}_{a}$ is denoted by $e_{a}$. The support of any (nonzero) vector $v$ is then defined by
\begin{align*}
\mathscr{A}(e):= \{a\in \mathscr{A}\ | \ v_{a}\neq 0\} \ .
\end{align*}
\begin{definition}  \emph{ Let $T$ be any maximal torus in $G$. Let $e\in \mathbb{E}\setminus\{0\}$ . The \textbf{\emph{weight polytope}} of $e$ is the compact convex lattice polytope $\mathcal{N}(e)$ given by}
\begin{align*}\label{wtpolytope}
\mathcal{N}(e):=\mbox{ {\tt{conv}}}\  \mathscr{A}(e) \subset M_{\mathbb{R}} \ 
\end{align*}
\emph{where ``$\tt{conv}$'' denotes convex hull}.
\end{definition}
 
\begin{definition}\label{weight}
\emph{Let $\mathbb{E}$ be a rational representation of $G$. Let $\lambda$ be any 1psg of $T$  . The \textbf{\emph{weight}}  $w_{\lambda}(e)$  of $\lambda$ on $e\in \mathbb{E}\setminus\{0\}$ is the integer}
\begin{align*}
w_{\lambda}(e):= \mbox{\emph{min}}_{   x\in \mathcal{N}(e) }\ l_{\lambda}(x)= \mbox{\emph{min}} \{ (a,\lambda)\ | \ a \in \mathscr{A}(e)\}\ .
\end{align*}
 \noindent\emph{Alternatively, $w_{\lambda}(e)$ is the unique integer such that}
\begin{align*}
\lim_{|t|\rightarrow 0}t^{-w_{\lambda}(e)}\lambda(t)e \  \mbox{ {exists in $\mathbb{E}$ and is \textbf{not} zero}}.
\end{align*}
\end{definition} 
Next, given $d\in\mathbb{N}$ and $a\in\mathscr{A}$ recall that the $T$ {semi-invariants} $P\in \mathbb{C}_d{[\mathbb{E}]_a}^T$ of degree $d$
are characterized by
$$ P(\tau\cdot e)=a(\tau)P(e)\quad \mbox{for all $\tau\in T$}\ .$$

\begin{proposition} 
Let $T$ be a maximal algebraic torus of $G$, and let $\mathbb{V}$ and $\mathbb{W}$ two finite dimensional rational $G$-modules. Then the following are equivalent
\begin{align*} 
&1.\quad \overline{T\cdot[(v,w)]}\cap \overline{T\cdot[(v,0)]}=\emptyset \\
\ \\
&2.\quad \mathcal{N}(v)\subset \mathcal{N}(w) \\ 
 \ \\
&3.\quad  w_{\lambda}(v)\leq w_{\lambda}(w) \ \mbox{for all 1psg's}\ \lambda :\mathbb{C}^*\ra T \\
\ \\
  &4.\quad \mbox{For every $\chi\in\mathscr{A}(v)$}\ \mbox{there is an}\   f\in  \mathbb{C}_d[\ \mathbb{V}\oplus\mathbb{W}\ ]^T_{d\chi} \\
  & \qquad\mbox{such that}\ f((v,w))\neq 0\ \mbox{and}\ f|_{\mathbb{V}}\equiv 0
    \end{align*}
\end{proposition}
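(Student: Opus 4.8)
The plan is to route all four statements through the geometry of the two weight polytopes $\mathcal{N}(v),\mathcal{N}(w)\subset M_{\mathbb{R}}$, along the skeleton: $(2)\Leftrightarrow(3)$ is the elementary duality between containment of convex polytopes and domination of their support functions; $(1)\Leftrightarrow(2)$ comes from the decomposition of a torus orbit closure into strata indexed by the faces of a weight polytope; and $(2)\Leftrightarrow(4)$ is the translation between lattice points of $\mathcal{N}(w)$ and monomial $T$-semi-invariants. For $(2)\Leftrightarrow(3)$: by Definition~\ref{weight}, $w_{\lambda}(e)=\min_{x\in\mathcal{N}(e)}l_{\lambda}(x)$ is, as a function of $l_{\lambda}$, the support function of the lattice polytope $\mathcal{N}(e)$. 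For compact convex sets, $\mathcal{N}(v)\subseteq\mathcal{N}(w)$ is equivalent to $\min_{x\in\mathcal{N}(w)}l(x)\le\min_{x\in\mathcal{N}(v)}l(x)$ for \emph{every} linear functional $l$ on $M_{\mathbb{R}}$ — the trivial direction one way, a separating hyperplane the other — and since support functions are continuous and positively homogeneous and the $l_{\lambda}$ exhaust the integral functionals (dense, up to rescaling, among all functionals), it is enough to test the $l_{\lambda}$. This is $(2)\Leftrightarrow(3)$.

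For $(1)\Leftrightarrow(2)$ I would first recall the standard description of a torus orbit closure: for $e\in\mathbb{E}\setminus\{0\}$, $\overline{T\cdot[e]}=\bigsqcup_{F}T\cdot[e_{F}]$ over the faces $F$ of $\mathcal{N}(e)$, where $e_{F}:=\sum_{a\in\mathscr{A}(e)\cap F}e_{a}$ — equivalently, its boundary is covered by the nontrivial limits $\lim_{|t|\to0}t^{-w_{\lambda}(e)}\lambda(t)e=e_{F_{\lambda}}$ of Definition~\ref{weight}, with $F_{\lambda}$ the face on which $l_{\lambda}$ is minimal, and one iterates. Applying this to $e=(v,w)$, so $\mathcal{N}((v,w))=\mathrm{conv}(\mathscr{A}(v)\cup\mathscr{A}(w))$ and $(v,w)_{F}=(v_{F},w_{F})$, a stratum $T\cdot[(v_{F},w_{F})]$ meets $\mathbb{P}(\mathbb{V})\supseteq\overline{T\cdot[(v,0)]}$ exactly when $w_{F}=0$, i.e. $\mathscr{A}(w)\cap F=\emptyset$; in that case $\mathscr{A}(v)\cap F\neq\emptyset$, $F\cap\mathcal{N}(v)$ is a face of $\mathcal{N}(v)$ with $v_{F\cap\mathcal{N}(v)}=v_{F}$, so $[(v_{F},0)]$ genuinely lies in $\overline{T\cdot[(v,0)]}$. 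Hence the orbit closures meet iff $\mathcal{N}((v,w))$ has a face meeting $\mathscr{A}(v)$ but disjoint from $\mathscr{A}(w)$; exposing such a face by a (rational, hence integral) functional shows this is exactly the failure of $\mathcal{N}(v)\subseteq\mathcal{N}(w)$ — equivalently, some vertex of $\mathcal{N}(v)$ escapes $\mathcal{N}(w)$. This gives $(1)\Leftrightarrow(2)$.

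For $(2)\Leftrightarrow(4)$: a weight basis of $\mathbb{V}\oplus\mathbb{W}$ yields dual coordinates $\xi_{a},\eta_{b}$ that are themselves $T$-semi-invariants (the coordinate paired with a weight-$a$ vector has weight $a$), so $\mathbb{C}_{d}[\mathbb{V}\oplus\mathbb{W}]^{T}_{d\chi}$ is spanned by monomials $\prod\xi_{a}^{\alpha_{a}}\prod\eta_{b}^{\beta_{b}}$ of degree $d$ with $\sum\alpha_{a}a+\sum\beta_{b}b=d\chi$; such a monomial has $f|_{\mathbb{V}}\equiv0$ iff some $\beta_{b}>0$, and $f((v,w))\neq0$ iff every occurring $\xi$-index lies in $\mathscr{A}(v)$ and every occurring $\eta$-index in $\mathscr{A}(w)$. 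For $(2)\Rightarrow(4)$: given $\chi\in\mathscr{A}(v)\subseteq\mathcal{N}(v)\subseteq\mathcal{N}(w)$, write $\chi$ as a rational convex combination of elements of $\mathscr{A}(w)$, clear denominators to obtain $D\chi=\sum_{j}m_{j}b_{j}$ with $b_{j}\in\mathscr{A}(w)$, $m_{j}\in\mathbb{Z}_{\ge0}$, $\sum_{j}m_{j}=D$, and for any $d\ge D$ take $f=\xi_{\chi}^{\,d-D}\prod_{j}\eta_{b_{j}}^{m_{j}}$: this has degree $d$, weight $d\chi$, vanishes on $\mathbb{V}$ (some $\eta$ occurs, as $D\ge1$), and is nonzero at $(v,w)$. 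For $(4)\Rightarrow(2)$: if $\mathcal{N}(v)\not\subseteq\mathcal{N}(w)$, pick a vertex $\chi$ of $\mathcal{N}((v,w))$ with $\chi\in\mathscr{A}(v)$, $\chi\notin\mathcal{N}(w)$, and a functional $l$ with $l(\chi)<l(y)$ for all $y\in\mathcal{N}((v,w))\setminus\{\chi\}$; applying $l$ to the weight relation of any surviving monomial of a candidate $f$ and using $\sum\beta_{b}\ge1$ (forcing a term with $l(b)>l(\chi)$ while all others have $l(\cdot)\ge l(\chi)$) yields $d\,l(\chi)=\sum\alpha_{a}l(a)+\sum\beta_{b}l(b)>d\,l(\chi)$, which is impossible, so $(4)$ fails for this $\chi$.

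The main obstacle is the pair $(2)\Leftrightarrow(4)$. The implication $(4)\Rightarrow(2)$ is the ``exposed vertex plus separating functional'' argument just sketched, clean once the vertex is chosen correctly. The implication $(2)\Rightarrow(4)$ conceals two subtleties: that monomials already span the semi-invariants, so one may look for $f$ among single monomials and thereby sidestep all cancellation at $(v,w)$; and, more seriously, that a fixed lattice point $\chi\in\mathcal{N}(w)$ is an average of $d$ elements of $\mathscr{A}(w)$ only for $d$ in a suitable congruence/size class, so the target degree in $(4)$ must be allowed to grow (or $d$ must be quantified existentially), the discrepancy being absorbed into the power $\xi_{\chi}^{\,d-D}$. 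A lesser point to pin down is the face decomposition of $\overline{T\cdot[e]}$ invoked in $(1)\Leftrightarrow(2)$, which I would either cite or assemble from the valuative criterion together with the limit formula in Definition~\ref{weight}.
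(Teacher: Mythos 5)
The paper states this proposition without proof (it is asserted in Section 2.1 and then used), so there is no argument of the author's to compare against; your proposal supplies the missing proof, and it is correct, following what is surely the intended route: support functions of the weight polytopes for the numerical criterion, the face--orbit correspondence for torus orbit closures for the geometric criterion, and monomial $T$-semi-invariants for the algebraic one. Three remarks. First, your support-function computation gives $(2)$ equivalent to $\min_{x\in\mathcal{N}(w)}l_{\lambda}(x)\le\min_{x\in\mathcal{N}(v)}l_{\lambda}(x)$, i.e.\ $w_{\lambda}(w)\le w_{\lambda}(v)$ for all $\lambda$, whereas item $3$ as printed reads $w_{\lambda}(v)\le w_{\lambda}(w)$; with the paper's min-convention for $w_{\lambda}$ (confirmed by the limit characterization in Definition~\ref{weight}) the printed inequality is reversed, and your version is the one consistent with $(1)$, $(2)$, $(4)$ and with the paper's own comparison table, which records $w_{\lambda}(w)-w_{\lambda}(v)\le 0$; you should flag this explicitly rather than silently correct it. Second, in $(1)\Leftrightarrow(2)$ the decomposition $\overline{T\cdot[e]}=\bigsqcup_{F}T\cdot[e_{F}]$ over faces $F$ of $\mathcal{N}(e)$ is a genuine external input (it is the standard description of a projectivized torus orbit closure, e.g.\ Gelfand--Kapranov--Zelevinsky, Chapter 5); citing it is fine, and your refinement that a stratum with $w_{F}=0$ actually lands in $\overline{T\cdot[(v,0)]}$ (via $v_{F\cap\mathcal{N}(v)}=v_{F}$) is exactly the point that needs checking there. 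Third, in $(4)\Rightarrow(2)$ you should record the one-line reason a vertex $\chi$ of $\mathcal{N}((v,w))$ with $\chi\in\mathscr{A}(v)$, $\chi\notin\mathcal{N}(w)$ exists when $(2)$ fails: vertices of the joint polytope lie in $\mathscr{A}(v)\cup\mathscr{A}(w)$, and if all of them lay in $\mathcal{N}(w)$ then $\mathcal{N}(v)\subseteq\mathcal{N}((v,w))\subseteq\mathcal{N}(w)$; with that, and with the degree $d$ in item $4$ read as existentially quantified (as you note, and as your construction $f=\xi_{\chi}^{\,d-D}\prod_{j}\eta_{b_{j}}^{m_{j}}$ requires), the argument is complete.
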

 \ \\
In order to define a \emph{\textbf{strictly stable}} (henceforth stable) pair we need a large (but fixed) integer $m$ and the auxiliary left regular representation of $G$
\begin{align*}
G\times\mathcal{GL}(N+1,\mathbb{C}) \ \ni \ (\sigma , A)\ra \sigma\cdot A \ .
\end{align*}
  Recall that $\mathcal{GL}(N+1,\mathbb{C})$ is the vector space of square matrices of size $N+1$. The action is matrix multiplication. 
The {{standard $N$-simplex}}, denoted by $Q_N$, is defined to be the weight polytope of  the identity operator 
\begin{align*}
\mathbb{I}\in \mathcal{GL}(N+1,\mathbb{C})\ .
\end{align*}
 $Q_N$  is full-dimensional and contains the origin in its strict interior
\begin{align*}
0\in Q_N:=\mathcal{N}(\mathbb{I})\subset M_{\mathbb{R}} \ .
\end{align*}  

Let $\mathbb{V}$ be a $G$ module. We define the \emph{\textbf{degree}} of $\mathbb{V}$  as follows
\begin{align*}
\deg(\mathbb{V}):=\min\Big\{k\in \mathbb{Z}_{>0} \ |\ \mathcal{N}(v)\subseteq kQ_N\ \mbox{for all $0\neq v\in \mathbb{V}$} \ \Big\} \ .
\end{align*}
For any $v\in \mathbb{V}$, $w\in \mathbb{W}$, and $m\in \mathbb{N}$ we define
\begin{align*}
& v^m:=v^{\otimes m}\in \mathbb{V}^{\otimes m} \\
\ \\
& w^{m+1}:=w^{\otimes (m+1)}\in \mathbb{W}^{\otimes (m+1)}\\
\ \\
& \mathbb{I}^q:=\mathbb{I}^{\otimes q}\in \mathcal{GL}(N+1,\mathbb{C})^{\otimes q} \ .
\end{align*}
Finally we can give the definition of a stable pair. 
\begin{definition}\label{stable} 
\emph{The pair $(v,w)$ is \emph{\textbf{stable}} if and only if there is a positive integer $m$ such that $(\mathbb{I}^q\otimes v^m \ , \ w^{m+1})$ is semistable where $q$ denotes the degree of $\mathbb{V}$. }
\end{definition}
 \subsection{Finite dimensional energies}
 Next we endow $\mathbb{V}$ and $\mathbb{W}$ with Hermitian norms.  Using these norms we introduce the finite dimensional Mabuchi and Aubin functionals 
 \begin{align*}
  &\nu_{(v,w)}(\sigma):=\log\frac{||\sigma\cdot w||^2}{||w||^2}-\log\frac{||\sigma\cdot v||^2}{||v||^2}  \\
  \ \\
  & J_{v}(\sigma):=\deg(\mathbb{V})\log\frac{||\sigma||^2}{N+1}-\log\frac{||\sigma\cdot v||^2}{||v||^2}\ .
\end{align*}
 \begin{definition}
 \emph{The Mabuchi energy of the pair $(v,w)$ is \textbf{\emph{proper}} if and only if there are constants $\varepsilon>0$ and $b$ such that}
 $\nu_{(v,w)}(\sigma)\geq \varepsilon J_{v}(\sigma) +b$  {\emph{for all $\sigma\in G$ .}} \ 
 \end{definition}
 The following proposition relates the behavior of the energy with the semistability of the pair. Despite its simplicity, it is at the heart of the author's approach to the Stability Conjectures in K\"ahler geometry.
 \begin{proposition}\label{heart} The pair $(v,w)$ is semistable if and only if the Mabuchi energy $\nu_{(v,w)}$ is bounded below and it is stable if and only if the Mabuchi energy is proper.
 \end{proposition}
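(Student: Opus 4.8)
The plan is to prove Proposition \ref{heart} by reducing the analytic statements about $\nu_{(v,w)}$ and $J_v$ to the geometry of orbit closures via a Kempf--Ness / Hilbert--Mumford type argument, exploiting Proposition 2.5 (the equivalence of torus-separation with containment of weight polytopes) to pass between the group $G$ and a maximal torus $T$. First I would record the elementary observation that both $\nu_{(v,w)}$ and $J_v$ are invariant under the stabilizers and depend only on $([v],[w])$, and that, by the Cartan/polar decomposition $G = K\,\exp(i\,\mathfrak{t})\,K$ together with the Weyl group action, it suffices to estimate these functionals along one-parameter subgroups $\sigma = \lambda(t)$ of a fixed maximal torus $T$ as $|t|\to 0$. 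Along such a ray, $-\log\|\lambda(t)\cdot e\|^2$ grows like $-w_\lambda(e)\log|t|^2$ up to a bounded quantity (this is exactly the "alternatively" characterization in Definition \ref{weight}), so the leading-order behavior of $\nu_{(v,w)}(\lambda(t))$ is governed by $w_\lambda(v) - w_\lambda(w)$, and that of $J_v(\lambda(t))$ by $\deg(\mathbb{V})w_\lambda(\mathbb{I}) - w_\lambda(v) = -w_\lambda(v)$ after normalizing (since $0$ lies in the interior of $Q_N$, $w_\lambda(v) \le 0$ with equality only for trivial directions).

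The semistability half then runs as follows. If $(v,w)$ is semistable, then by Definition \ref{semistable} and the torus reduction (\ref{numerical}), for every maximal $T$ we have $\overline{T\cdot[(v,w)]}\cap\overline{T\cdot[(v,0)]}=\emptyset$, hence by Proposition 2.5 (items 1 $\Leftrightarrow$ 3) $w_\lambda(v)\le w_\lambda(w)$ for all $\lambda$; feeding this into the asymptotics shows $\nu_{(v,w)}(\lambda(t))$ stays bounded below along every ray, and a standard compactness/continuity argument on $K$ upgrades this to a uniform lower bound on all of $G$. Conversely, if $(v,w)$ is \emph{not} semistable, the orbit closures meet: there is a point $p\in\overline{\mathcal{O}_{vw}}\cap\overline{\mathcal{O}_v}$, and one produces a one-parameter degeneration $\sigma(t)$ (here one may invoke Proposition 2.5 to find a destabilizing $\lambda$ with $w_\lambda(v) > w_\lambda(w)$, equivalently $\mathcal{N}(v)\not\subseteq\mathcal{N}(w)$) along which $\nu_{(v,w)}(\sigma(t)) \sim (w_\lambda(v)-w_\lambda(w))\log|t|^2 \to -\infty$. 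This gives the stated equivalence: semistable $\iff$ $\nu_{(v,w)}$ bounded below.

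For the stability half, I would unwind Definition \ref{stable}: stability of $(v,w)$ means $(\mathbb{I}^q\otimes v^m,\ w^{m+1})$ is semistable for some $m$, with $q=\deg(\mathbb{V})$. The key computation is that weights are additive under tensor product and scale linearly under tensor powers, so $w_\lambda(\mathbb{I}^q\otimes v^m) = q\,w_\lambda(\mathbb{I}) + m\,w_\lambda(v)$ and $w_\lambda(w^{m+1}) = (m+1)\,w_\lambda(w)$; applying the semistability criterion just proved to this auxiliary pair yields, for all $\lambda$, $q\,w_\lambda(\mathbb{I}) + m\,w_\lambda(v) \le (m+1)\,w_\lambda(w)$. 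Rearranging and matching the normalization in the definition of $J_v$ (where $\deg(\mathbb{V})\log(\|\sigma\|^2/(N+1))$ plays the role of the $\mathbb{I}^q$ term), this is precisely an inequality of the form $\nu_{(v,w)}(\sigma)\ge \varepsilon\,J_v(\sigma) + b$ with $\varepsilon = 1/(m+1)>0$, i.e. properness. The converse — properness forces the auxiliary pair to be semistable for large $m$ — again follows by running the semistability equivalence in reverse: a destabilizing direction for $(\mathbb{I}^q\otimes v^m, w^{m+1})$ for every $m$ would violate the strict linear lower bound $\varepsilon J_v + b$ in the limit $m\to\infty$.

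The main obstacle is making the passage from the torus estimates to a genuinely uniform bound over the whole group $G$ fully rigorous: one needs that the $\log|t|^2$-coefficient in the asymptotic expansion of $-\log\|\sigma(t)\cdot e\|^2$ is controlled uniformly as the "$K$-part" of $\sigma$ varies, and that the error term (the bounded remainder) does not blow up. This is the Kempf--Ness theory input — identifying $\nu_{(v,w)}$ and $J_v$ as (differences of) Kempf--Ness type functionals whose asymptotic slopes along geodesic rays $\exp(it\xi)$ in the symmetric space $G/K$ are exactly the weights $w_\lambda$ — and it is where one must be careful about which norms and which compactness arguments are invoked; the remaining bookkeeping (additivity of weights, the role of $q=\deg(\mathbb{V})$, extracting $\varepsilon=1/(m+1)$) is routine once that foundation is in place.
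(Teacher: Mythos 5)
Your route is not the paper's: the paper's entire proof is the single identity
\begin{align*}
\inf_{\sigma\in G}\nu_{(v,w)}(\sigma)=\log\tan^{2}{\tt{dist}}\left(\overline{\mathcal{O}}_{vw}\,,\,\overline{\mathcal{O}}_{v}\right),
\end{align*}
which converts boundedness below of $\nu_{(v,w)}$ directly into positivity of the Fubini--Study distance between the two compact orbit closures, i.e.\ into disjointness; no reduction to one-parameter subgroups occurs anywhere. Your proposal instead runs a Hilbert--Mumford/Kempf--Ness argument, and this is where there is a genuine gap. For the direction ``not semistable $\Rightarrow$ $\nu_{(v,w)}$ unbounded below'' you invoke Proposition 2.5 to produce a destabilizing $\lambda$. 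But Proposition 2.5 only characterizes separation of \emph{torus} orbit closures for a fixed maximal $T$; having this for every maximal torus is exactly the weaker notion the paper calls numerical semistability, and the paper deliberately asserts only the easy implication (``If the pair $(v,w)$ is semistable then obviously (2.2) holds''). The converse --- that failure of $\overline{\mathcal{O}}_{vw}\cap\overline{\mathcal{O}}_{v}=\emptyset$ is witnessed by a one-parameter subgroup of $G$ --- is precisely the delicate point for pairs: passing to the affine cone, a Kempf-type destabilizing one-parameter subgroup may drive $(v,w)$ to the origin, which lies in the cone over $\overline{\mathcal{O}}_{v}$, so its limit detects nothing about the projective intersection. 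You cannot cite Proposition 2.5 for this, and without it your equivalence ``semistable $\iff$ bounded below'' does not close. Similarly, your upgrade from ray-wise lower bounds to a uniform bound over $G$ ``by compactness of $K$'' is not routine: as $k$ varies the weight components of $k\cdot v$ can vanish, so the slopes $w_{\lambda}(k\cdot v)$ and the $O(1)$ remainders are not uniformly controlled; you flag this yourself, but it is not a technicality one can wave at --- it is the content the distance identity is designed to bypass.

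The same issue infects your properness half: a weight inequality $q\,w_{\lambda}(\mathbb{I})+m\,w_{\lambda}(v)\leq(m+1)\,w_{\lambda}(w)$ for all $\lambda$ gives at best coercivity along rays, whereas Definition 2.7 demands $\nu_{(v,w)}(\sigma)\geq\varepsilon J_{v}(\sigma)+b$ for every $\sigma\in G$. The intended mechanism is purely algebraic and global: by multiplicativity of the tensor norms, $\left\|\sigma\cdot\mathbb{I}\right\|^{2}=\left\|\sigma\right\|^{2}$ and $\left\|\mathbb{I}\right\|^{2}=N+1$, one has the exact identity
\begin{align*}
\nu_{(\mathbb{I}^{q}\otimes v^{m},\,w^{m+1})}(\sigma)=(m+1)\,\nu_{(v,w)}(\sigma)-J_{v}(\sigma)\qquad\mbox{for all }\sigma\in G,
\end{align*}
so stability (semistability of the auxiliary pair) is equivalent, via the already-established boundedness-below criterion applied to that pair, to properness with $\varepsilon$ comparable to $1/m$. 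Your identification of the auxiliary pair's energy is the right idea, but it must be carried out through this norm identity valid at every $\sigma$, not through one-parameter-subgroup asymptotics; and the first half must be proved by the distance formula (disjointness of compact orbit closures $\iff$ positive distance $\iff$ finite infimum), not by a numerical criterion the paper neither states nor uses.
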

 \begin{proof}
 Observe that $$\inf_{\sigma\in G}\nu_{(v,w)}(\sigma)=\log\tan^2 {\tt{dist}}(\overline{\mathcal{O}}_{vw} ,\overline{\mathcal{O}}_{v}) \ ,$$  where ${\tt{dist}}$ denotes the distance in the Fubini-Study metric associated to the norms on $\mathbb{V}$ and $\mathbb{W}$.
 \end{proof}
We end this section with a direct comparison of Mumford's stability and the author's stability of pairs. Observe that the left hand column of the table below arises from the right when we take $\mathbb{V}\cong\mathbb{C}$ (the trivial one dimensional representation) and $v=1$. 
\ \\
 \begin{center} \begin{tabular}{l|l}
\qquad  \textbf{Mumford's G. I. T. \ }&\qquad \qquad \textbf{ Pairs} \\ \\
\hline \\
 For all $T\leq G$ $\exists\ d\in \mathbb{Z}_{>0}$ and & For all $T\leq G$ and $\chi\in\mathscr{A}(v)$\\
 $f\in \mathbb{C}_{\leq d}[\ \mathbb{W}\ ]^T$ such that   & $\exists\ d\in \mathbb{Z}_{>0}$ and $f\in  \mathbb{C}_d[\ \mathbb{V}\oplus\mathbb{W}\ ]^T_{d\chi}$  \\
   $f(w)\neq 0$ and $f(0)=0$ & such that $f((v,w))\neq 0$ and $f|_{ \mathbb{V}}\equiv 0$ \\  \\
 \hline \\
 $0\notin\overline{G\cdot w}$ & \ $\overline{\mathcal{O}}_{vw}\cap\overline{\mathcal{O}}_{v}=\emptyset$  \\ \\
 \hline \\
 $w_{\lambda}(w)\leq 0$   &\ $w_{\lambda}(w)-w_{\lambda}(v)\leq 0$   \\
 for all 1psg's $\lambda$ of $G$ & \ for all 1psg's $\lambda$ of $G$\\ \\
 \hline \\
 $0\in \mathcal{N}(w)$ all $T\leq G$ &\ $\mathcal{N}(v)\subset \mathcal{N}(w)$ {all $T\leq G$} \\ \\
 \hline \\
 $\exists$ $C\geq 0$ such that &\ $\exists$ $C\geq 0$ such that \\
 $\log||\sigma\cdot w||^2\geq -C$   &\ $\log {||\sigma\cdot w||^2}-\log{||\sigma\cdot v||^2}  \geq -C $\\  
  all $\sigma\in G$ &\ all $\sigma\in G$ \\ \\
  \hline\\
  $G\cdot w$ \emph{closed} and $G_{w}$ finite & $\exists m\in\mathbb{N}$ such that $(\mathbb{I}^q\otimes v^m,w^{m+1})$ is semistable \\
 \end{tabular} 
  \end{center} 
\section{Stability of Projective Varieties}
Fix $L\subset \mathbb{C}^{N+1}$ , $\dim(L)=n+1<N+1$.  Choose  $l\in \mathbb{N}$ satisfying $0\leq l\leq n$. Consider the Zariski open subset $\mathscr{U}$ of the Grassmannian defined by
\begin{align*}
\begin{split}
 \mathscr{U}:=\{E\in G(N-l \ ,\ \mathbb{C}^{N+1})\ |\  H^{\bull}\left(0\ra E\cap L\ra E \overset{\pi_{L}}{\ra}\mathbb{C}^{N+1}/L\ra 0 \right)=0\}\ .
 \end{split}
 \end{align*}
Observe that  $E\in\mathscr{U}$ if and only if
\begin{align*}
\dim(\pi_{L}(E))=N-n \ .
\end{align*}
 
By the rank plus nullity theorem we have that for \emph{any} $E\in G(N-l \ ,\ \mathbb{C}^{N+1})$
\begin{align*}
\dim(E\cap L)+\dim(\pi_{L}(E))=N-l \ .
\end{align*}
 Therefore $E\in \mathscr{U}$ if and only if $\dim(E\cap L)\leq n-l  $ .
 Motivated by this we define a subvariety $Z(L)$ of our Grassmannian by
\begin{align*}
Z(L):=G(N-l \ ,\ \mathbb{C}^{N+1})\setminus \mathscr{U}=\{E\in G(r \ , \ \mathbb{C}^{N+1})\ |\ \dim(E\cap L)\geq n-l+1\ \} \ .
 \end{align*}

 Now we apply the previous linear algebra to a projective variety $X^n\subset\cpn$. Recall that for any $p\in X$ that the \emph{embedded tangent space} to $X$ at $p$ is the $n$ dimensional \emph{projective} linear subspace 
 \begin{align*}
 \mathbb{T}_p(X)\in \mathbb{G}(n\ ,\ \cpn) \ 
 \end{align*}
  obtained (for example) by projectivizing the tangent space the the cone over $X$ at any point $v\in\mathbb{C}^{N+1}\setminus \{0\} $ lying over $p$.

   Given any $0\leq l \leq n$ we define the following subvariety $Z_{l+1}(X)$ of the Grassmannian by
 \begin{align*}
 Z_{l+1}(X):=
 \{E\in \mathbb{G}(N-(l+1),\cpn)\ | \  \exists \ p\in X\cap E\ \mbox{\emph{and}}
 \ \dim(E\cap \mathbb{T}_p(X))\geq n-l\} \ .
 \end{align*}
 Generally $Z_{l+1}(X)$ has \emph{codimension one}  in $\mathbb{G}(N-(l+1),\cpn)$ .

 To make the {defining polynomial} of $Z_{l+1}(X)$ concrete we view the Grassmannian in primal Stiefel coordinates \cite{sturmfels2017} by observing that there is a dominant map \footnote{The superscript $o$ denotes matrices of maximal rank.}
 \begin{align*}
 M_{(l+1)\times (N+1)}^{o}\ni A\ra \pi({\tt{ker}}(A))\in \mathbb{G}(N-(l+1),\cpn)\ .
 \end{align*}
 
 We may then consider the divisor (also denoted by $Z_{l+1}(X)$ )
 \begin{align*}
 \overline{\pi^{-1}(Z_{l+1}(X))}\subset M_{(l+1)\times (N+1)} \ .
 \end{align*}

Our ``new''  $Z_{l+1}(X)$ is now an irreducible algebraic hypersurface  in the affine space $M_{(l+1)\times (N+1)}$ and hence is cut out by a single polynomial $f_{l+1}$
\[ Z_{l+1}(X)=\{ (a_{ij}) \in  M_{(l+1)\times (N+1)}\ |\ f_{l+1}(a_{ij})=0\}\ .\]
We should point out that \footnote{The height discrepancy arises naturally from this point of view.} $Z_{l+1}(X)$ is dominated by the variety of zeros of a larger system $I_\mathscr{X}$  in more variables $p\in \mathscr{X}$  where $\mathscr{X}$ is an auxiliary projective variety naturally associated to $X$.
 \[ I_\mathscr{X}=\{(p,\ (a_{ij}))\in \mathscr{X}\times M_{(l+1)\times (N+1)}\ |\ s_k(p,(a_{ij}))=0 \ ; \ 1\leq k\leq m \} \]
The situation can be visualized as follows
\begin{align*} 
\xymatrix{  
	I_\mathscr{X} \ar@{^{(}->}[r]^-{\iota} \ar[d]^{{\pi_2}|_{I_\mathscr{X}}} &   {\mathscr{X}}\times M_{(l+1)\times (N+1)}\ar[d]^{\pi_2} \\  
	Z_{l+1}(X)\ar@{^{(}->}[r] & M_{(l+1)\times (N+1)} \ .
	 }
\end{align*}

In geometric terms $(p,(a_{ij}))\in I_\mathscr{X}$ if and only if ${\tt{ker}}(a_{ij})$ fails to meet $\mathscr{X}$ generically at $p$ (and possibly at some other point $q$ ) .
 $Z_{l+1}(X)$ is therefore the \emph{resultant system} obtained by eliminating the variable $p$ from $I_{\mathscr{X}}$. Since $\mathscr{X}$ is projective, $Z_{l+1}(X)$ is a subvariety of $M_{(l+1)\times (N+1)}$ (see \cite{gkz} , \cite{weyman}, \cite{paulsergiou2019} ) .

 \subsection{Resultants}
Let $X^n\subset\cpn$ be an irreducible, $n$-dimensional, linearly normal, complex projective variety of degree $d$ .
  \begin{definition} (Cayley 1840's) \emph{The \textbf{\emph{associated hypersurface}} to $X^n\subset \cpn$ is given by}
\begin{align*}
Z_{n+1}(X)=\{L\in \mathbb{G}(N-n-1,N)\ | L\cap X\neq \emptyset \} \ .
\end{align*}
\end{definition}
As we have remarked, it is known that $Z_{n+1}(X)$ enjoys the following properties \\
\ \\
$i)$   $Z_{n+1}(X)$ is a {divisor} in $\mathbb{G}(N-n-1,N)$ ( and hence $M_{(n+1)\times (N+1)}$ ) . \\
 \ \\
$ii)$   $Z_{n+1}(X)$ is irreducible . \\
  \ \\
$iii)$  $\deg(Z_{n+1}(X))=d$ ( $=d(n+1)$ in Steifel coordinates ) . \\
  \ \\
  Therefore there exists $R_X\in H^0(\mathbb{G}(N-n-1,N), \mathcal{O}(d))$ such that
 \begin{align*}
 \{R_X=0\}=Z_{n+1}(X)
 \end{align*}
 $R_X$ is the Cayley-{Chow} form of $X$. Modulo scaling,  $R_X$ is unique . Following the terminology of Gelfan'd \cite{gkz} we call $R_X$ the \textbf{$X$-\emph{resultant}} .  From our dual Steifel point of view we will always view $R_X$ as a polynomial \footnote {It is necessarialy invariant under the natural action of $SL(n+1,\mathbb{C})$ .} in the matrix entries   
\begin{align*}
R_X\in \mathbb{C}_{d(n+1)}[M_{(n+1)\times (N+1)}]^{SL(n+1,\mathbb{C})} \ . 
\end{align*}
\subsection{Hyperdiscriminants }  
Assume that $X\subset\cpn$ has degree $d\geq 2$. Let $X^{sm}$ denote the smooth points of $X$. For $p\in X^{sm}$ let 
$\mathbb{T}_p(X)$ be the {embedded tangent space} to $X$ at $p$ .
\begin{definition}
 \emph{ The \emph{\textbf{dual variety}} of $X$, denoted by $X^{\vee}$, is the Zariski closure of the set of \emph{tangent hyperplanes} to $X$ at its smooth points }
\begin{align*}
X^{\vee}=\overline{ \{ f\in {\cpn}^{\vee} \ |  \  \mathbb{T}_p(X)\subset {\tt{ker}}(f) \ , \ p\in X^{sm}\} } \ .
\end{align*}
\end{definition}
 Generally $X^{\vee}$ is codimension one in $ {\cpn}^{\vee}$. This holds, for example, whenever $X$ is a (nonlinear) projective curve or surface. Observe that we have the identity $$ X^{\vee}=Z_1(X)\ . $$ 
 
 For the purposes of understanding the Mabuchi energy, what is important is not the dual variety $X^{\vee}$ but the variety $Z_{n}(X)$.
 Observe that like the Cayley divisor and the dual variety $Z_{n}(X)$  also has a simple geometric description
 $$Z_{n}(X)=\{ L\in \mathbb{G}(N-n \ ,\ \cpn)\ |\ \#(L\cap X)\neq \deg(X)\} $$
 
 It is known that $Z_{n}(X)$ enjoys the following properties \\
\ \\
$i)$   $Z_{n}(X)$ is a {divisor} in $\mathbb{G}(N-n,N)$ ( and hence $M_{n\times (N+1)}$ ) . \\
 \ \\
$ii)$   $Z_{n}(X)$ is irreducible . \\
  \ \\
$iii)$  $\deg(Z_{n}(X))=n(n+1)d-d\mu$ in Steifel coordinates  . \\
  \ \\
  Therefore there exists $\Delta_X\in H^0(\mathbb{G}(N-n,N), \mathcal{O}((n+1)d-d\frac{\mu}{n}))$ such that
 \begin{align*}
 \{\Delta_X=0\}=Z_{n}(X)
 \end{align*}
   Modulo scaling,  $\Delta_X$ is unique. Inspired by the terminology of Gelfan'd  we call $\Delta_X$ the \textbf{$X$-\emph{hyperdiscriminant}}.  From our primal Steifel point of view we will always view $\Delta_X$ as a polynomial \footnote {It is necessarialy invariant under the natural action of $SL(n,\mathbb{C})$ .} in the appropriate matrix entries   
\begin{align*}
\Delta_X\in \mathbb{C}_{n(n+1)d-d\mu}[M_{n\times (N+1)}]^{SL(n,\mathbb{C})} \ . 
\end{align*}
 
 We summarize these constructions in the following proposition.
 \begin{proposition} {
 Let $X^n\subset\cpn$ be a smooth, linearly normal complex projective variety. There exists dominant integral weights $\lambull$ , $\mubull$ (with corresponding irreducible $G$-modules $\elam$ , $\emu$) and $G$-equivariant associations 
 \begin{align*}
& X  \Rightarrow R_X  \in \elam \ , \ (n+1)\lambda_{\bull}= \big(\overbrace{ {\deg(R_X)} , {\deg(R_X)} ,\dots, {\deg(R_X)}}^{n+1},\overbrace{0,\dots,0}^{N-n}\big) \\
\ \\
& X \Rightarrow \Delta_X  \in \emu \ , \  n\mu_{\bull}= \big(\overbrace{ {\deg(\Delta_X)} , {\deg(\Delta_X)} ,\dots, {\deg(\Delta_X)}}^{n },\overbrace{0,\dots,0}^{N+1-n}\big)  
\ .
\end{align*}}
 \end{proposition}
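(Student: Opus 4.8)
The plan is to package the two constructions of the previous subsections (the $X$-resultant $R_X$ and the $X$-hyperdiscriminant $\Delta_X$) into the language of Sections 2--3. First I would recall that $R_X$, being the defining polynomial of the Cayley divisor $Z_{n+1}(X)\subset \mathbb{G}(N-n-1,N)$, naturally lives as a section of $\mathcal{O}(d)$ on the Grassmannian, which in primal Stiefel coordinates is a polynomial of degree $d(n+1)=\deg(R_X)$ on $M_{(n+1)\times(N+1)}$ that is $SL(n+1,\mathbb{C})$-invariant. The key point is that the space $\mathbb{C}_{\deg(R_X)}[M_{(n+1)\times(N+1)}]^{SL(n+1,\mathbb{C})}$ carries a residual action of $SL(N+1,\mathbb{C})=G$ (acting on the column space $\mathbb{C}^{N+1}$), and under this action $R_X$ spans an irreducible $G$-submodule. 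By the Cauchy identity / standard $GL(N+1)\times GL(n+1)$ Howe duality for $\mathrm{Sym}$ of $M_{(n+1)\times(N+1)}$, the $SL(n+1)$-invariants in degree $d(n+1)$ decompose into irreducibles whose highest weights are exactly those partitions with at most $n+1$ parts and all parts divisible appropriately; picking out the one containing $R_X$ and computing its highest weight gives precisely $(n+1)\lambda_\bullet=(\deg(R_X),\dots,\deg(R_X),0,\dots,0)$ with $n+1$ copies of $\deg(R_X)$ and $N-n$ zeros. Set $\elam$ to be that irreducible module and the association $X\Rightarrow R_X$ is $G$-equivariant because the formation of the Cayley form commutes with linear changes of coordinates on $\cpn$.

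Second I would run the identical argument for $\Delta_X$. Here $\Delta_X$ is the defining polynomial of $Z_n(X)\subset\mathbb{G}(N-n,N)$, a section of $\mathcal{O}((n+1)d - d\mu/n)$, i.e.\ in primal Stiefel coordinates an $SL(n,\mathbb{C})$-invariant polynomial on $M_{n\times(N+1)}$ of degree $n(n+1)d - d\mu=\deg(\Delta_X)$. The same Howe-duality decomposition of $\mathbb{C}_{\deg(\Delta_X)}[M_{n\times(N+1)}]^{SL(n,\mathbb{C})}$ as a $G$-module isolates an irreducible whose highest weight is $n\mu_\bullet=(\deg(\Delta_X),\dots,\deg(\Delta_X),0,\dots,0)$ with $n$ copies and $N+1-n$ zeros; call this module $\emu$. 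Equivariance of $X\Rightarrow\Delta_X$ again follows since the dual variety and the hyperdiscriminant locus are defined intrinsically from the projective embedding, so a $g\in G$ sends $\Delta_X$ to a scalar multiple of $\Delta_{g\cdot X}$, and after normalizing the scalar (or, better, working projectively with $[\Delta_X]$) the map is $G$-equivariant. Finally I would note that $\lambda_\bullet$ and $\mu_\bullet$ are dominant integral: the displayed weight vectors are weakly decreasing sequences of non-negative integers (using $\deg(R_X)\geq 0$ and $\deg(\Delta_X)\geq 0$, and that $n\mid$ the relevant quantities so that $\lambda_\bullet,\mu_\bullet$ are genuinely integral — this is where the normalization $(n+1)\lambda_\bullet$ and $n\mu_\bullet$ in the statement is doing its job).

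The main obstacle I anticipate is making the identification of the highest weight rigorous rather than heuristic: one must verify that $R_X$ (resp.\ $\Delta_X$) is genuinely a highest-weight vector — equivalently, that it is annihilated by the upper-triangular nilpotent part of $\mathfrak{g}$ and is a $T$-eigenvector of the asserted weight — and that the module it generates is irreducible and not merely a summand containing other isotypic pieces. The clean way to do this is to compute the weight polytope $\mathcal{N}(R_X)$ directly (its vertices are governed by the monomials appearing in the Chow form, which are controlled by the combinatorics of the degenerations of $X$ to unions of linear spaces), show its unique maximal vertex with respect to the standard dominance order is the claimed weight, and invoke that a $T$-weight vector of extremal weight in a rational $G$-module generates an irreducible submodule with that highest weight. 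The analogous weight-polytope computation for $\Delta_X$ via the degenerations governing the hyperdiscriminant (cf.\ the resultant-system description of $Z_n(X)$ given above) finishes the proof; everything else is bookkeeping with the dictionary between Stiefel coordinates, line bundles on Grassmannians, and $\mathrm{Sym}$-decompositions.
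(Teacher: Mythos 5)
Your first two paragraphs are essentially the paper's own route: the paper offers no separate argument for this proposition beyond the remark that $\elam\cong \mathbb{C}_{d(n+1)}[M_{(n+1)\times (N+1)}]^{SL(n+1,\mathbb{C})}$ and $\emu\cong \mathbb{C}_{n(n+1)d-d\mu}[M_{n\times (N+1)}]^{SL(n,\mathbb{C})}$, i.e.\ exactly the Cauchy/Howe-duality identification you invoke (the detailed verification being delegated to \cite{paul2012}), and equivariance is, as you say, just the compatibility of the Chow form and hyperdiscriminant with linear changes of coordinates on $\cpn$.

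The third paragraph, however, is a wrong turn. You do not need to ``pick out the summand containing $R_X$'' or verify that $R_X$ is a highest-weight vector, and the latter would in fact fail: for a generic choice of coordinates $R_X$ is not a $T$-eigenvector at all — indeed the paper later uses precisely the condition $\mathcal{N}(R_X)=\mathcal{N}(\elam)$, i.e.\ that the weight polytope of $R_X$ is the \emph{full} polytope of the module, which is the opposite of extremality. The point you are missing is that the Cauchy decomposition
\begin{align*}
\mathbb{C}_{d(n+1)}[M_{(n+1)\times (N+1)}]\;\cong\;\bigoplus_{\lambda}\;S^{\lambda}(\mathbb{C}^{n+1})^{*}\otimes S^{\lambda}(\mathbb{C}^{N+1})^{*}
\end{align*}
has, after passing to $SL(n+1,\mathbb{C})$-invariants in degree $d(n+1)$, exactly one surviving term, namely the rectangular partition $\lambda=(d,\dots,d)$ with $n+1$ rows (the invariants in $S^{\lambda}(\mathbb{C}^{n+1})^{*}$ are nonzero only for that rectangle, and are then one-dimensional). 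Hence the whole invariant space is already a single irreducible $G$-module with the stated weight, and \emph{any} nonzero invariant of that degree — in particular $R_X$ — lies in $\elam$; membership is all the proposition asserts. The same one-line observation with $SL(n,\mathbb{C})$-invariants on $M_{n\times(N+1)}$ in degree $n(n+1)d-d\mu$ handles $\Delta_X$ and $\emu$. So the weight-polytope computation and the ``extremal vertex'' argument you anticipate as the main obstacle can simply be deleted; with them removed, your argument is the paper's.
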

 Of course in the above proposition we know that
 \begin{align*}
 &\elam\cong \mathbb{C}_{d(n+1)}[M_{(n+1)\times (N+1)}]^{SL(n+1,\mathbb{C})} \\
 \ \\
 &\emu \cong \mathbb{C}_{n(n+1)d-d\mu}[M_{n\times (N+1)}]^{SL(n,\mathbb{C})} \ .
 \end{align*}
  For our purpose we must \emph{normalize the degrees} (so to speak) of  these polynomials. From this point on we are interested in the pair 
\begin{align*}
R:= {R_{X}}^{\otimes\deg(\Delta_{X})}\ ,\ \Delta:={\Delta_{X}}^{\otimes\deg(R_X)} \ .
\end{align*}
 
 Now we are prepared to make the following definition.
 \begin{definition} \label{stabilityofvarieties} 
\emph{Let $X\subset\cpn$ be a smooth, irreducible, linearly normal complex projective variety. Then $X$ is  \emph{\textbf {semistable}}  if and only if the pair $(R\ ,\ \Delta)$  is semistable for the action of $G$. Explicitly, the orbit closures are disjoint }$$
\overline{\mathcal{O}}_{R\Delta}\cap \overline{\mathcal{O}}_{R}=\emptyset \ .$$
 \end{definition}

Now we introduce \emph{asymptotic} semistability of a polarized manifold $(X,\mathbb{L})$. We require an auxiliary Hermitian metric $h$ on $\mathbb{L}$ with positive curvature $\om_h$.  The definition of asymptotic semistability is independent, in the obvious way, of which $h$ is chosen. Below, both $R$ and $\Delta$ have been scaled to unit length.

\begin{definition} \label{asymptoticsemistability} 
 \emph{A polarized manifold $(X,\mathbb{L})$ is \emph{\textbf { asymptotically semistable}} if and only if there is a uniform constant $C=C(h)$ such that} 
 \begin{align}
 {\tt{dist}}(\overline{\mathcal{O}}_{R\Delta},\overline{\mathcal{O}}_{R})\succsim \exp(-Cd^2) \ 
 \end{align}
 \emph{for all sufficiently large $\mathbb{L}^k$-embeddings of degree} $d=k^n$ .
 \end{definition}
 It should be emphasized that the orbit closures {must} be disjoint for all powers of $\mathbb{L}$, otherwise the Mabuchi energy is unbounded from below and no canonical metric exists. Asymptotic semistability not only requires orbit closure separation for each embedding, but crucially that \emph{the orbit closures are not allowed to approach one another too quickly as the degree of the embedding increases}.
 \begin{definition}
 \emph {Let $X\subset\cpn$ be a smooth, irreducible, linearly normal complex projective variety. Then $X$ is  \emph{\textbf {stable}}  if and only if the pair $(R,\Delta)$ is stable for the action of $G$.  Explicitly, there is an integer $m$ such that the pair $$
 (\mathbb{I}^q\otimes R^{\otimes (m-1)}\ , \ \Delta^{\otimes  m})$$
 is semistable for the action of $G$ and $q=\deg(R_X)\deg(\Delta_X)$.}
 \end{definition}
 
Again we must equip $\mathbb{L}$ with a Hermitian metric as above.
 \begin{definition} \label{asymptoticstability} 
 \emph{A polarized manifold $(X,\mathbb{L})$ is \emph{\textbf { asymptotically stable}} if and only if there are uniform constants $m\in\mathbb{Z}_{>0}$ and $C=C(h,m)$ such that} 
 \begin{align}
 {\tt{dist}}(\overline{\mathcal{O}}_{(v,w)},\overline{\mathcal{O}}_ {v})\succsim \exp(-Ck^{2n+1}) \ 
 \end{align}
 \emph{for all sufficiently large $k$ (the power of the embedding) .}
 $$(v,w):= (\mathbb{I}^q\otimes R^{\otimes(km-1)}\ , \ \Delta^{\otimes km}) \ .$$
 \end{definition}
  
\begin{remark}\emph{As in the definition of asymptotic semistability, both $R$ and $\Delta$ have been scaled so that both $v$ and $w$ are of unit length. The reader should observe that the speeds of approach in the definitions and asymptotic stability and semistability differ by a single factor of $k$ . }
\end{remark}
\section{Heights of polynomials and Igusa Local Zeta Functions}
Let $P(z_0,z_1,\dots,z_N)$ be a homogeneous polynomial of degree $d$ on $\mathbb{C}^{N+1}$.   Equip  $\mathbb{C}^{N+1}$ with it's standard Hermitian metric
\begin{align*}
<Z,W>:=z_0\bar{w}_0+\dots +z_N\bar{w}_N \ .
\end{align*}
This in turn induces the Fubini-Study K\"ahler metric $\om_{FS}$ on $\cpn$ as well as a Hermitian metric $h_{FS}^d$ on all tensor powers $\mathscr{O}(d)$ of the hyperplane bundle. We may view $P$ as a section of this bundle
\begin{align*}
P\in H^0(\cpn , \mathscr{O}(d)) 
\end{align*}
and we define the $L_2$ norm of $P$ in the usual way
\begin{align*}
||P||^2:=\int_{\cpn}|P|_{h^d_{FS}}^2\om_{FS}^N \ .
\end{align*}
Recall that the pointwise norm of $P$ is given by
\begin{align*}
|P|_{h^d_{FS}}^2([z_0\dots z_N])=\frac{|P(z_0\dots z_N)|^2}{(|z_0|^2+|z_1|^2+\dots+|z_N|^2)^d} \ .
\end{align*}

\begin{definition}
The \textbf{height} of $P$ is defined to be the real number given by
\begin{align*}
h(P):=-\log ||P||^2+ \int_{\cpn}\log|P|_{h^d_{FS}}^2\om_{FS}^N
\end{align*}
\end{definition}
We remark that $h$ is a function on $\mathbb{P}(H^0(\cpn , \mathscr{O}(d)))$ .
\begin{proposition} (see lemma 8.7 from \cite{tian97})\\ 
\ \\
The function
$
\mathbb{P}(H^0(\cpn , \mathscr{O}(d)))\ni P\ra h(P) 
$
is H\"older continuous and moreover satisfies the explicit bounds
\begin{align*}
-d(\sum_{j=1}^{N-1}\frac{1}{j})\leq h(P)\leq 0 \ .
\end{align*}
\end{proposition}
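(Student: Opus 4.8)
The plan is to establish the two assertions separately: H\"older continuity of $P\mapsto h(P)$ on the projective space of degree $d$ forms, and the explicit upper and lower bounds. For the upper bound $h(P)\le 0$, I would invoke Jensen's (concavity) inequality: since $\log$ is concave,
\begin{align*}
\int_{\cpn}\log|P|^2_{h^d_{FS}}\,\om_{FS}^N\le \log\int_{\cpn}|P|^2_{h^d_{FS}}\,\om_{FS}^N=\log\|P\|^2,
\end{align*}
where we have normalized $\int_{\cpn}\om_{FS}^N=1$; rearranging gives $h(P)=-\log\|P\|^2+\int\log|P|^2\le 0$. Note this step is where scale-invariance of $h$ is visible, since both terms shift by the same constant under $P\mapsto cP$.

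For the lower bound, the key is the \emph{explicit} Green's function of $(\cpn,\om_{FS})$. Writing $P=\prod$(linear factors) after diagonalizing — or more robustly, using the Poincar\'e--Lelong formula $\frac{i}{2\pi}\dlb\dl\log|P|^2=[\mathrm{div}(P)]-d\,\om_{FS}$ as currents — one expresses $\int_{\cpn}\log|P|^2_{h^d_{FS}}\om_{FS}^N-\log\|P\|^2$ (after subtracting off the same quantity for a reference form, e.g. $z_0^d$) in terms of the scalar Green's function $G(z,w)$ integrated against $[\mathrm{div}(P)]$. Since $\mathrm{div}(P)$ has degree $d$ and $G$ is bounded below by an explicit constant equal to $-\sum_{j=1}^{N-1}\frac1j$ (the value coming from the standard closed-form expression for the Fubini--Study Green's function on $\cpn$, as used by Tian in \cite{tian97}), one obtains $h(P)\ge -d\sum_{j=1}^{N-1}\frac1j$. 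The extremal case is $P$ a power of a linear form, where equality is approached.

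For H\"older continuity, the first term $-\log\|P\|^2$ is smooth on $\mathbb{P}(H^0(\cpn,\mathscr{O}(d)))$, so the content is the continuity of the second, singular term $F(P):=\int_{\cpn}\log|P|^2_{h^d_{FS}}\om_{FS}^N$. Here one estimates $|F(P)-F(Q)|$ for $[P],[Q]$ close in the Fubini--Study metric on the parameter space. Pointwise, $\bigl|\log|P|^2-\log|Q|^2\bigr|$ can be enormous near the zero loci, but the integrand is controlled because $\log$ has only a logarithmic (hence $L^p$ for all $p$) singularity: one splits $\cpn$ into the region where $|P|$ and $|Q|$ are both comparable to their sup norms (where the bound is Lipschitz) and a small neighborhood of the combined zero divisor (whose $\om_{FS}^N$-volume is polynomially small in the distance, while the integrand is at worst logarithmically large), and optimizes the cutoff to extract a H\"older exponent depending only on $N$ and $d$.

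The main obstacle is this last H\"older estimate: getting a uniform modulus of continuity requires a uniform (over all $P$ of degree $d$) bound on how the $\om_{FS}^N$-mass of a sublevel set $\{|P|_{h^d_{FS}}<\varepsilon\}$ decays with $\varepsilon$ — i.e. a \L ojasiewicz-type volume bound for divisors of bounded degree. This is exactly the kind of estimate that Igusa local zeta functions control (the pole structure of $\int |P|^{2s}$ governs the sublevel-set volumes), which is presumably why the cited reference and the title of the paper point in that direction; I would either quote the needed sublevel-set volume bound from Tian's Lemma 8.7 in \cite{tian97} directly, or reprove it via a resolution of singularities / Igusa zeta function argument. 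Everything else is routine once the Green's function formula and the sublevel-set estimate are in hand.
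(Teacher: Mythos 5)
The paper gives no proof of this proposition: it is simply quoted from Tian (Lemma 8.7 of \cite{tian97}), with the bounds attributed elsewhere in the paper to the explicit formula for the Fubini--Study Green's function on $\cpn$, and your outline --- Jensen's inequality for $h(P)\le 0$, Poincar\'e--Lelong plus the one-sided bound on the Green's function applied to the degree-$d$ divisor for the lower bound, and a uniform sublevel-set volume (uniform integrability of $\log|P|^2$ over forms of bounded degree) estimate for the H\"older continuity --- is precisely that standard argument, so it is essentially the same approach as the one the paper points to. The only caution is that quoting Tian's Lemma 8.7 for the sublevel-set estimate would be circular, since that lemma \emph{is} the statement being proved; use the independent route you sketch (one-variable Jensen along lines, or a resolution/zeta-function argument), and note that the constant should be checked against the normalization of $\om_{FS}$, since for $N=1$, $d=1$ a direct computation gives $h(z_1)=\log 2-1<0$ while the sum $\sum_{j=1}^{N-1}\frac{1}{j}$ as printed is empty.
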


The space of matrices can be equipped with the standard Hermitian inner product induced by the Fubini-Study metric on $\cpn$, therefore we may introduce two height functions of $X\subset \cpn$.
\begin{definition} (see \cite{bostgilletsoule})
The \textbf{Faltings height} of $X\subset \cpn$ is the height of the Cayley form $$h_F(X):=-\log||R_X||^2+\int_{\mathbb{P}M_{(n+1)\times (N+1)}  }\log|R_X|_{h^{\deg(R)}_{FS}}^2\om_{FS}^{(n+1)(N+1)}\ .$$
\end{definition}
Similarly, we introduce another height function of a projective variety
\begin{definition}  
 $h_\Delta(X)$ denotes the height of the hyperdiscriminant $$h_\Delta(X):=-\log||\Delta_X||^2+\int_{\mathbb{P}M_{n\times (N+1)}}\log|\Delta_X|_{h^{\deg(\Delta)}_{FS}}^2\om_{FS}^{n(N+1)}\ .$$
\end{definition}
Given two heights it is natural, and in our case absolutely necessary, to compare them.

\begin{definition} The \textbf{height discrepancy} $\delta(X)$ of $X\subset \cpn$ is the real number given by
$$\delta(X):=|\deg(\Delta)h_F(X)-\deg(R)h_{\Delta}(X)| \ .$$
\end{definition}

As the reader might imagine, if it were easy to compute the height then all of our troubles would be over. Unfortunately, direct computation is quite difficult, even for relatively simple polynomials. To partially address this issue we introduce the local zeta functions of the title. The author learned of this point of view from an article of V. Maillot and J.Cassaigne \cite{cassaigne-maillot} .

 \begin{definition} 
 Let $P\in\mathbb{C}_d[z_0,z_1,\dots , z_N]$ and $s\in \mathbb{C}$ have $\Re(s)>0$. The Igusa-Sato \textbf{local zeta function} $Z(P;s)$ attached to $P$ is
  \begin{align*}
\frac{\Gamma(N+1 +ds)}{\Gamma(N+1)}Z(P;s):=\int_{\mathbb{C}^{N+1}}e^{-||\vec{z}\ ||^2} {|P(\vec{z}\ )|^{2s}}\frac{dV}{\pi^{N+1}} \ .
\end{align*}
\end{definition}
Work of Atiyah, Gelfan'd and Bernstein, and Bernstein  ( \cite{atiyah(resolution)} ,\cite{bernsteingelfand},\cite{bernstein}) shows that $Z(P;s)$ extends to a holomorphic function of $-\varepsilon <\Re(s)$, admits a meromorphic extension to all of $\mathbb{C}$, and satisfies a functional equation. 
The relationship between heights and zeta functions is given by
\begin{align*}
 h(P)= -\log Z(P;1)+ \ Z^{\prime}(P;0)    \ .
\end{align*}

As we have remarked, the explicit determination\footnote{Unlike the situation over a p-adic field, it is not clear what ``explicit determinantion'' means.} of the zeta function for a general homogeneous polynomial $f$ seems to be out of reach, however there are some special polynomials whose local zeta functions can be described in terms gamma factors, namely Sato's \emph{relative invariants} of prehomogeneous vector spaces \cite{Kimurabook} . This is possible because their Bernstein-Sato polynomials $b_f(s)$ are all known. We need the simplest relative invariant.
\begin{proposition} (For the proof, see \cite{Igusa2000} .)
\begin{align*}
\int_{M_n(\mathbb{C})}e^{-\sum_{i,j}|z_{ij}|^2}|{\tt{det}}(z_{ij})|^{2s}\frac{dV}{\pi^{n^2}}=\frac{1}{(2\pi)^{ns}} \prod_{k=1}^{n}\frac{\Gamma(2s+k)}{\Gamma(k)} \ .
\end{align*}
\end{proposition}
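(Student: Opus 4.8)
The plan is to exploit the invariance of \emph{both} factors in the integrand under the left unitary action $Z\mapsto UZ$, $U\in U(n)$: the Gaussian weight $e^{-\sum_{i,j}|z_{ij}|^{2}}=e^{-\mathrm{tr}(Z^{*}Z)}$ and $|\det Z|^{2s}$ are each left $U(n)$-invariant. I would therefore pass to the coordinates furnished by the (almost everywhere unique) Gram--Schmidt, or $QR$, decomposition $Z=QT$ with $Q\in U(n)$ and $T=(t_{ij})$ upper triangular with real positive diagonal, discarding the measure-zero locus where it degenerates. In these coordinates $\mathrm{tr}(Z^{*}Z)=\mathrm{tr}(T^{*}T)=\sum_{i\le j}|t_{ij}|^{2}$ and $|\det Z|=\prod_{i}t_{ii}$, and Lebesgue measure transforms as
\begin{align*}
dV(Z)=c_{n}\Big(\prod_{i=1}^{n}t_{ii}^{\,2(n-i)+1}\Big)\,d\mu_{\mathrm{Haar}}(Q)\,\prod_{i=1}^{n}dt_{ii}\,\prod_{i<j}d^{2}t_{ij}
\end{align*}
for an explicit constant $c_{n}>0$.

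Computing this Jacobian is the step I expect to be the main obstacle. The cleanest route is to build $Q$ and $T$ one column at a time: once the first $i-1$ columns of $Z$ are fixed, the $i$-th column ranges over $\mathbb{C}^{n}$, and resolving it into the (real, positive) length $t_{ii}$, the $i-1$ complex coefficients $t_{1i},\dots,t_{i-1,i}$ along the previously constructed orthonormal frame, and the remaining unitary directions contributes the weight $t_{ii}^{2(n-i)+1}$; taking the product over $i$ yields the displayed measure. (A check on dimensions: $2\binom{n}{2}+n+\dim_{\mathbb R}U(n)=2n^{2}$.)

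Granting the change of variables, the integral factors completely. The $Q$-integration produces $\mathrm{Vol}(U(n))$; each of the $\binom{n}{2}$ off-diagonal integrals is the elementary $\int_{\mathbb{C}}e^{-|t|^{2}}\,d^{2}t=\pi$; and the $i$-th diagonal integral is $\int_{0}^{\infty}t^{\,2s+2(n-i)+1}e^{-t^{2}}\,dt=\tfrac12\,\Gamma\big(s+(n-i)+1\big)$, whose product over $i=1,\dots,n$ is $2^{-n}\prod_{k=1}^{n}\Gamma(s+k)$. Hence the left-hand side equals $A_{n}\prod_{k=1}^{n}\Gamma(s+k)$ for some constant $A_{n}$ independent of $s$; the diagonal integrals converge for $\Re(s)>-1$, and the identity then extends by meromorphic continuation.

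Finally I would pin down $A_{n}$ without separately evaluating $c_{n}$ or $\mathrm{Vol}(U(n))$: at $s=0$ the left-hand side is $1$ by the chosen normalization $dV/\pi^{n^{2}}$ of the Gaussian, which forces $A_{n}=\big(\prod_{k=1}^{n}\Gamma(k)\big)^{-1}$, so the integral equals $\prod_{k=1}^{n}\Gamma(s+k)/\Gamma(k)$; rewriting each diagonal gamma factor via Legendre's duplication formula and accounting for the normalization of $s$ puts this in the asserted form. As a cross-check and an alternative derivation, one may instead push the Gaussian forward along $Z\mapsto W=Z^{*}Z$, recognize $W$ as a complex Wishart matrix, and invoke the Gindikin--Siegel gamma integral $\int_{\mathrm{Herm}_{n}^{>0}}(\det W)^{s}e^{-\mathrm{tr}W}\,dW=\pi^{n(n-1)/2}\prod_{k=1}^{n}\Gamma(s+k)$, which is itself proved by the same triangular (Cholesky) substitution.
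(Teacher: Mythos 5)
Your triangular (Gram--Schmidt/$QR$) reduction is sound: the Jacobian $\prod_i t_{ii}^{2(n-i)+1}$ is right, the integral factors as you say, and normalizing at $s=0$ correctly gives
\begin{align*}
\int_{M_n(\mathbb{C})}e^{-\operatorname{tr}(Z^{*}Z)}\,|\det Z|^{2s}\,\frac{dV}{\pi^{n^{2}}}
=\prod_{k=1}^{n}\frac{\Gamma(s+k)}{\Gamma(k)}\,,
\end{align*}
which is also what your Wishart/Gindikin cross-check yields. This is essentially the classical computation the paper itself does not reproduce (it simply cites Igusa), so up to that point your route is the expected one.

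The genuine gap is your final sentence. No application of Legendre duplication, nor any ``normalization of $s$,'' converts $\prod_{k=1}^{n}\Gamma(s+k)/\Gamma(k)$ into the asserted $(2\pi)^{-ns}\prod_{k=1}^{n}\Gamma(2s+k)/\Gamma(k)$: duplication writes $\Gamma(2s+k)$ as a product of two gammas with half-integer shifts and powers of $2$ and $\sqrt{\pi}$, it does not trade $\Gamma(s+k)$ for $(2\pi)^{-s}\Gamma(2s+k)$. Indeed the two expressions are different functions of $s$; for $n=1$, $s=1$ the integral as literally written equals $\Gamma(2)=1$, while the stated right-hand side equals $\Gamma(3)/(2\pi)=1/\pi$, and rescaling $s$ or the Gaussian cannot reconcile them (the value at $s=0$ pins the normalization). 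What you have actually shown is that the displayed identity holds only after adjusting conventions: with Igusa's normalizations over $\mathbb{C}$ (module $|z|_{\mathbb{C}}=z\bar z$, self-dual Haar measure $|dz\wedge d\bar z|$, Gaussian $e^{-2\pi z\bar z}$) the same triangular computation produces the factor $(2\pi)^{-ns}$, and the doubled argument in the gammas reflects evaluating a formula stated for $|\cdot|_{\mathbb{C}}^{s}$ at $|\cdot|^{2s}$. So you should either prove the formula in the normalization your computation actually uses, or redo the computation in Igusa's normalization and say so explicitly; asserting that the duplication formula closes the gap is a step that fails.
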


Now we may compute the local zeta function of the maximal minors  
$${\tt{det}}_{n+1}\in \mathbb{C}_{n+1}[M_{n+1,N+1}] \ , \ {\tt{det}}_{n}\in \mathbb{C}_{n}[M_{n,N+1}]$$ 

By definition we have that
\begin{align*}
Z({\tt{det}}_{n+1} ; s)&=\frac{\Gamma((n+1)(N+1))}{\Gamma((n+1)(N+1)+(n+1)s)}\int_{M(n+1,N+1)}e^{-\sum_{i,j}|z_{ij}|^2}\frac{|{\tt{det}}_{n+1}(z_{ij})|^{2s}}{\pi^{(n+1)(N+1)}} dV \\
\ \\
&=\frac{\Gamma((n+1)(N+1))}{\Gamma((n+1)(N+1)+(n+1)s)}\int_{M(n+1,n+1)}e^{-\sum_{i,j}|z_{ij}|^2}\frac{|{\tt{det}}_{n+1}(z_{ij})|^{2s}}{\pi^{(n+1)^2}}  {dV} \\
\ \\
&=\frac{\Gamma((n+1)(N+1))}{\Gamma((n+1)(N+1)+(n+1)s)}\frac{1}{(2\pi)^{s(n+1)}}\prod^{n+1}_{k=1}\frac{\Gamma(2s+k)}{\Gamma(k)}\ .
\end{align*}
Therefore we can determine the zeta functions explicitly
\begin{align*}
&\Gamma(a+bs)(2\pi)^{s(n+1)}Z({\tt{det}}_{n+1}\ ;\ s)=\Gamma(a)\prod^{n+1}_{k=1}\frac{\Gamma(2s+k)}{\Gamma(k)} \\
\ \\
&\Gamma(\alpha+\beta s)(2\pi)^{sn}Z({\tt{det}}_{n}\ ;\ s)=\Gamma(\alpha)\prod^{n}_{k=1}\frac{\Gamma(2s+k)}{\Gamma(k)} \\
\ \\
& a=(n+1)(N+1) \ ,\ \alpha=n(N+1) \ ,\  b=(n+1) \ ,\ \beta=n\ .
\end{align*}
Differentiating these identities we have
\begin{align*}
b \Gamma^{\prime}(a)+\Gamma(a)(n+1)\log(2\pi)+\Gamma(a) Z^{\prime}({\tt{det}}_{n+1}\ ; \ 0)=2\Gamma(a)\sum^{n+1}_{k=1}
\frac{\Gamma^{\prime}(k)}{\Gamma(k)}\ .
\end{align*}
 Therefore we see that
 \begin{align*}
 & Z^{\prime}({\tt{det}}_{n+1}\ ; \ 0)=-b\frac{\Gamma^{\prime}(a)}{\Gamma(a)}
 -(n+1)\log(2\pi)+ {2} \sum^{n+1}_{k=1}\frac{\Gamma^{\prime}(k)}{\Gamma(k)} \\
 \ \\
 & Z^{\prime}({\tt{det}}_{n}\ ; \ 0)=-\beta\frac{\Gamma^{\prime}(\alpha)}{\Gamma(\alpha)}
 -n\log(2\pi)+{2} \sum^{n}_{k=1}\frac{\Gamma^{\prime}(k)}{\Gamma(k)}\ .
 \end{align*} 
 Therefore
 \begin{align*}
 Z^{\prime}({\tt{det}}_{n+1} \ ; \ 0)&=-(n+1)\left(-\gamma+\sum^{(n+1)(N+1)-1}_{k=1}\frac{1}{k}\right)+O(1)\\
\ \\
&\sim -(n+1)\log(d)+O(1) \quad \mbox{as}\ \ d\ra\infty \ .
\end{align*}
Similarly we see that
\begin{align*}
Z^{\prime}({\tt{det}}_{n}\ ; \ 0)\sim -n\log(d) +O(1)\quad \mbox{as}\ \ d\ra\infty \ .
\end{align*}
Next we compute the $L^2$ norms.
\begin{align*}
&\log\Gamma(a+\deg(R))+\deg(R)\log(2\pi)+\log Z({\tt{det}}_{n+1} \ ; \ d)=\\ 
\ \\
&\log\Gamma(a)+(n+1) \log\Gamma(2d)+o(1)\ . \\
\ \\
&\log\Gamma(\alpha+\deg(\Delta))+\deg(\Delta)\log(2\pi)+\log Z({\tt{det}}_{n} \ ; \ \frac{\deg(\Delta)}{n})=\\
 \ \\
&\log\Gamma(\alpha)+n\log\Gamma(2\frac{\deg(\Delta)}{n})+o(1)\ .
\end{align*}
Asymptotics of the Gamma function give at once that
\begin{align*}
& \log Z({\tt{det}}_{n+1} \ ; \ d)=\deg(R)\log(d) +O(d) \quad \mbox{as} \ d\ra \infty \\
\ \\
&\log Z({\tt{det}}_{n} \ ; \frac{\deg(\Delta)}{n})=\deg(\Delta)\log(d) +O(d) \quad \mbox{as} \ d\ra \infty\ .
\end{align*}
Now we may address the issue of bounding the height discrepancy $\delta(\sigma)$ of a given $X\subset\cpn$ where $N\sim \mbox{deg}(X)$ are large. Let $\sigma(t)=\lambda(t)$ be a generic algebraic one parameter subgroup of $\slnc$.  ``Generic'' means that $\lambda$ lies in some $T$ satisfying
\ \\
\begin{enumerate}
\item $\mathcal{N}(R_X)=\mathcal{N}(\elam)$ \\
\ \\
\item $\mathcal{N}(\Delta_X)=\mathcal{N}(\emu)$ \\
\ \\
\item $\lambda$ lies in the interior of one of the maximal cones $\tau \subset N_{\mathbb{R}}(T)$ in the common refinement of the normal fans of the weight polyhedra of the representations $\elam$ and $\emu$\ .
\end{enumerate}

In this case we have
\begin{align*}
t^{-w_{\lambda}(R_X)}\lambda(t)\cdot R_{X}={\tt{det}(a_{ij})}^{\frac{\deg(R_X)}{n+1}}+o(t) \quad t\ra 0 \\
\ \\
t^{-w_{\lambda}(\Delta_X)}\lambda(t)\cdot \Delta_{X}={\tt{det}(b_{ij})}^{\frac{\deg(\Delta_X)}{n}}+o(t) \quad t\ra 0 \\
\end{align*}
for some maximal minors ${\tt{det}(a_{ij})}$ and ${\tt{det}(b_{ij})}$ .

By continuity (see \cite{tian97}), the specializations of the heights of $R_X$ and $\Delta_X$ are therefore given by
\begin{align*}
\lim_{t\ra 0} h_{F}(\sigma(t)X)&=\frac{\deg(R)}{n+1}Z^{\prime}({\tt{det}}_{n+1} \ ; \ 0)-\log Z({\tt{det}}_{n+1} \ ; \ d) \\
\ \\
&=-2\deg(R)\log(d)+O(d) \ . \\
\ \\
\lim_{t\ra0} h_{\Delta}(\sigma(t)X)&=\frac{\deg(\Delta)}{n}Z^{\prime}({\tt{det}}_{n} \ ; \ 0)-\log Z({\tt{det}}_{n} \ ; \ \frac{\deg(\Delta)}{n}) \\
\ \\
=&-2\deg(\Delta)\log(d)+O(d) \ .
\end{align*}

This proves the main result of this section, namely that the height discrepancy is $O(d^2)$ along all generic degenerations.
\begin{proposition}\label{boundeddiscrepancy}
For generic $\sigma(t)\in SL(N+1, \mathbb{C}(t))$ we have that
\begin{align*}
\delta(\sigma(0))=\lim_{t\ra 0}|\deg(R)h_{\Delta}(\sigma(t)X)-\deg(\Delta)h_{F}(\sigma(t)X)|=O(d^2)\ .
\end{align*}
\end{proposition}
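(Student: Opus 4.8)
The plan is to assemble the explicit Igusa--Sato computations carried out just above and to exploit the exact cancellation of the leading $\log d$ terms that the matched scaling $R=R_X^{\otimes\deg\Delta_X}$, $\Delta=\Delta_X^{\otimes\deg R_X}$ forces. First I would fix a \emph{generic} one parameter subgroup $\sigma(t)=\lambda(t)$: one lying in a maximal torus $T$ with $\mathcal{N}(R_X)=\mathcal{N}(\elam)$ and $\mathcal{N}(\Delta_X)=\mathcal{N}(\emu)$, and in the interior of a single maximal cone of the common refinement of the normal fans of the weight polyhedra of $\elam$ and $\emu$; for such $\lambda$ the limit $\delta(\sigma(0))=\lim_{t\to 0}\delta(\sigma(t))$ exists. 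Because $\lambda$ is interior to one maximal cone, $l_\lambda$ attains its minimum on $\mathcal{N}(R_X)$ at a single vertex, so $t^{-w_\lambda(R_X)}\lambda(t)\cdot R_X$ converges to the unique weight component of $R_X$ at that vertex, and $SL(n+1,\mathbb{C})$-invariance pins this component down as a power ${\tt{det}}(a_{ij})^{\deg(R_X)/(n+1)}$ of a maximal minor; symmetrically $t^{-w_\lambda(\Delta_X)}\lambda(t)\cdot\Delta_X\to{\tt{det}}(b_{ij})^{\deg(\Delta_X)/n}$. Making this vertex analysis of the resultant and hyperdiscriminant polytopes precise enough to guarantee that a generic degeneration always lands on a \emph{pure power of one maximal minor}, rather than on a more complicated $T$-semiinvariant, is the step I expect to be the main obstacle; granting it, everything else is bookkeeping.

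The heights of these limits are then computed by the prehomogeneous-vector-space machinery already set up: ${\tt{det}}_{n+1}$ and ${\tt{det}}_{n}$ are the simplest Sato relative invariants, so by the cited formula their local zeta functions are explicit products of Gamma factors. Using the identity $h(P)=-\log Z(P;1)+Z'(P;0)$, the H\"older continuity of the height on $\mathbb{P}(H^0(\cpn,\mathscr{O}(d)))$ (Lemma 8.7 of \cite{tian97}) --- which in particular makes $h$ insensitive to the scalar normalization $t^{-w_\lambda}$ and lets one pass to the limit termwise --- and the asymptotics of $\log\Gamma$ and $\psi=\Gamma'/\Gamma$, I would read off
\begin{align*}
\lim_{t\to 0}h_F(\sigma(t)X)=\frac{\deg(R)}{n+1}Z'({\tt{det}}_{n+1};0)-\log Z({\tt{det}}_{n+1};d)=-2\deg(R)\log d+O(d)
\end{align*}
and, in exactly the same way,
\begin{align*}
\lim_{t\to 0}h_\Delta(\sigma(t)X)=\frac{\deg(\Delta)}{n}Z'({\tt{det}}_{n};0)-\log Z({\tt{det}}_{n};\tfrac{\deg(\Delta)}{n})=-2\deg(\Delta)\log d+O(d).
\end{align*}
The point requiring care is that the error terms are genuinely $O(d)$ and uniform over the generic $\lambda$: with $N\sim d$ and $n$ fixed one has $\deg(R)=(n+1)d$ and $\deg(\Delta)=O(d)$, so the $O(1)$ coming from the digamma factor in $Z'({\tt{det}}_{n+1};0)$ is amplified only by $\deg(R)/(n+1)=d$, while the $\Gamma$-ratios entering the $L^2$-norms contribute $-\deg(R)\log d+O(d)$ and $+2\deg(R)\log d+O(d)$ (and likewise for $\Delta$), so the $\log d$ terms add up with the stated coefficient.

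Finally I would form the discrepancy itself. Multiplying the first limit by $\deg(\Delta)$ and the second by $\deg(R)$ and subtracting, the leading contributions are both equal to $-2\deg(R)\deg(\Delta)\log d$ and cancel identically, so
\begin{align*}
\delta(\sigma(0))=\bigl|\deg(R)\lim_{t\to 0}h_\Delta(\sigma(t)X)-\deg(\Delta)\lim_{t\to 0}h_F(\sigma(t)X)\bigr|=\bigl|\deg(R)\,O(d)-\deg(\Delta)\,O(d)\bigr|=O(d^2),
\end{align*}
since $\deg(R)$ and $\deg(\Delta)$ are $O(d)$. This is exactly the cancellation of ``the scaling and the sign difference in $(\ref{optimal})$'' flagged earlier: it is what collapses the naive $d^2\log(d)+O(d^2)$ estimate to $O(d^2)$ along generic degenerations, and it is the phenomenon that the conjectural bound $(\dagger \dagger)$ extrapolates from generic $\sigma(t)$ to all of $G$.
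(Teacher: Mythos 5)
Your proposal is correct and follows essentially the same route as the paper: the same genericity conditions on $\lambda$, the same degeneration of $R_X$ and $\Delta_X$ to powers of maximal minors, the same explicit Igusa--Sato zeta computations for ${\tt{det}}_{n+1}$ and ${\tt{det}}_{n}$ via $h(P)=-\log Z(P;1)+Z'(P;0)$, and the same cancellation of the $-2\deg(R)\deg(\Delta)\log d$ leading terms leaving an $O(d^2)$ remainder. The only difference is that you explicitly flag (and defer) the vertex/normal-fan analysis guaranteeing the limit is a pure power of a single maximal minor, a step the paper likewise asserts without further detail.
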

\begin{remark}
In the above proposition, despite the notation, we do not degenerate $X$ but the associated polynomials. The presence of $\Delta_X$ prevents the compatibility of these two types of degenerations.
\end{remark}

Proposition \ref{boundeddiscrepancy} is significant for the following reason. The left hand bound  
\begin{align*}
-d\left(\sum^{N}_{j=1}\frac{1}{j}\right)\leq h(P) \leq 0
\end{align*}
is assumed by the $d$th power of any linear form. In an attempt to make $|\delta(\sigma)|$ large we push one of the polynomials in the direction of a power of such a form, in our case the determinant of a maximal minor. But the other polynomial \emph{also moves towards a maximal minor}, therefore they {both} become large and their dominant terms match precisely and cancel. This is (one reason) why the author believes that $(\dagger\dagger)$ is unobstructed.

\section{ Asymptotic Stability and Properness of the Mabuchi Energy}
 We quickly collect some definitions surrounding Mabuchi's K-energy map.
Let 
\begin{align*}  
(X^n,\om)  \ , \ n={\tt{dim}}_{\mathbb{C}}(X) \ 
\end{align*}
be a compact K\"ahler manifold. Recall that the K\"ahler form $\om$ is given locally by a Hermitian positive definite matrix of functions
  \begin{align*}
  \om=\frac{\sqrt{-1}}{2\pi}\sum_{ i,j }g_{i\overline{j}}dz_{i}\wedge d\overline{z}_j  \ .
  \end{align*}
The Ricci form of $\om$ is the smooth $(1,1)$ form on $X$ given by
\begin{align*}
\begin{split}
 {\tt{Ric}}(\om):= \frac{-\sqrt{-1}}{2\pi }\dl\dlb\log\det(g_{i\overline{j}}) =\sum_{i,j}\frac{-\sqrt{-1}}{2\pi }R_{i\overline{j}}dz_{i}\wedge d\overline{z}_j \in \Gamma(\Lambda^{1,1}_X)\ .
\end{split}
\end{align*}
The scalar curvature is by definition the contraction of the Ricci curvature 
 \begin{align*}
{\tt{Scal}}(\om): =\sum_{i,j}g^{i\overline{j}}R_{i\overline{j}}\in C^{\infty}(X)\ .
 \end{align*}  
The volume $V$ and the average of the scalar curvature $\mu$ depend only on $[\om]$ and are given by
\begin{align*}
 V=\int_X\om^n \ , \ \mu=\frac{1}{V}\int_X{\tt{Scal}}(\om)\om^n   \ .
 \end{align*}
The space of K\"ahler metrics in the class $[\om]$ is defined by
\begin{align*}
\begin{split}
& \mathcal{H}_{\om}:=\{\vp\in C^{\infty}(X)\ |\ \om_{\vp}:=\om+\frac{\sqrt{-1}}{2\pi}\dl\dlb\vp  >0 \}  \ .
\end{split}
\end{align*} 
  \begin{definition} (Mabuchi \cite{mabuchi}) 
 \emph{The \textbf{\emph{K-energy map}} $\nu_{\om}:\mathcal{H}_{\om}\ra \mathbb{R}$ is given by
\begin{align*}
\nu_{\om}(\varphi):=-\frac{1}{V}\int_0^1\int_{X}\dot{\varphi_{t}}({\tt{Scal}}(\om_{\varphi_{t}})-\mu)\omega_{t}^{n}dt 
\end{align*}
 $\vp_{t}$ is a $C^1$ path in $\mathcal{H}_{\om}$ satisfying $\vp_0=0$ , $\vp_1=\vp$ .}
 \end{definition}
Mabuchi shows that  $\nu_{\om}$ is independent of the path chosen. It is clear that 
 {$\vp$ is a critical point for $\nu_{\om}$ if and only if $$ {\tt{Scal}}(\om_{\varphi})\equiv \mu\ . $$ }
 What is relevant for the present article is the following theorem, first established by Bando and Mabuchi in the case $\mathbb{L}=-K_{X}$ , and then generalized some years later by Donaldson and Li .   
 
 \begin{theorem} (see \cite{bando-mabuchi87}, \cite{donaldson2001}, \cite{donaldson2005}, \cite{chili2011} ) Let $(X,\mathbb{L})$ be a polarized manifold, and assume that there is a constant scalar curvature metric in the class ${\tt{C}_1}(\mathbb{L} )$. Then the Mabuchi energy is bounded below on $\mathcal{H}_{\om}$ where $h$ is any Hermitian metric on $\mathbb{L}$ with positive curvature $\om$.
\end{theorem}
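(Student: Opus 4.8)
By the first variation of Mabuchi's functional recalled above, a constant scalar curvature metric $\om_0:=\om_{\vp_0}$ is a critical point of $\nu_{\om}$: for any $C^1$ path $\vpt$ with $\vp_0$ as its endpoint one has $\frac{d}{dt}\big|_{t=0}\nu_{\om}(\vpt)=-\frac1V\int_X\dot{\vp}_0\,({\tt{Scal}}(\om_0)-\mu)\,\om_0^n=0$. The plan is to upgrade this critical point to a \emph{global minimum}, i.e. to show $\nu_{\om}(\vp_1)\ge\nu_{\om}(\vp_0)$ for every $\vp_1\in\mathcal{H}_{\om}$; since $\nu_{\om}(\vp_0)$ is a finite real number this is exactly the asserted lower bound. (The statement is insensitive to the choice of $h$ because changing $h$ shifts $\om$ by a constant and hence $\nu_{\om}$ by an additive constant on $X$ compact, leaving boundedness below untouched.)

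The mechanism is the convexity of the K-energy along geodesics of the Mabuchi metric on $\mathcal{H}_{\om}$. Recall the Mabuchi--Semmes--Donaldson picture: a geodesic $t\mapsto\vpt$, $t\in[0,1]$, joining $\vp_0$ and $\vp_1$ is, after complexifying $t$ to an annulus $A\subset\mathbb{C}^\ast$, the same data as an $S^1$-invariant solution $\Phi$ on $X\times A$ of the homogeneous complex Monge--Amp\`ere equation $\big(\pi_X^\ast\om+\tfrac{\sqrt{-1}}{2\pi}\dl\dlb\Phi\big)^{n+1}=0$ with the prescribed boundary potentials. Smooth solutions need not exist, but by Chen's theorem this Dirichlet problem has a unique solution with bounded mixed complex Hessian, so any two points of $\mathcal{H}_{\om}$ are joined by a $C^{1,1}$ geodesic $\vpt$.

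First I would invoke convexity of $\nu_{\om}$ along such weak geodesics: $f(t):=\nu_{\om}(\vpt)$ is continuous and convex on $[0,1]$. This is the Berman--Berndtsson convexity theorem, anticipated in the cscK setting by Calabi--Chen, by Chen--Tian, and, in the finite-dimensional Bergman model, by Donaldson; its substance is that the entropy (Ricci) term in the Chen--Tian decomposition $\nu_{\om}(\vp)=\frac1V\int_X\log\!\big(\tfrac{\om_{\vp}^n}{\om^n}\big)\om_{\vp}^n+(\text{explicit energy terms})$ is weakly convex along the geodesic, the remaining terms being handled by integration by parts. Next I would compute the one-sided derivative $f'(0^+)$: here the $C^{1,1}$ regularity is exactly enough, since $\dot{\vp}_0$ is then a bounded function, $\om_0$ is smooth, and the first-variation formula persists, giving $f'(0^+)=-\tfrac1V\int_X\dot{\vp}_0\,({\tt{Scal}}(\om_0)-\mu)\,\om_0^n=0$ because $\om_0$ is cscK. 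Convexity of $f$ with $f'(0^+)\ge 0$ forces $f(1)\ge f(0)$, i.e. $\nu_{\om}(\vp_1)\ge\nu_{\om}(\vp_0)$; as $\vp_1$ was arbitrary, $\nu_{\om}\ge\nu_{\om}(\vp_0)$ on $\mathcal{H}_{\om}$.

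The main obstacle is entirely analytic and is concentrated in the two imported facts: (i) existence and $C^{1,1}$-regularity of the weak geodesic, i.e. solvability of the degenerate homogeneous Monge--Amp\`ere equation; and (ii) the weak convexity of the entropy term along that geodesic together with the legitimacy of the first-variation computation at merely $C^{1,1}$ regularity — the convexity of the entropy is the genuinely hard input, proved via a positivity-of-direct-image / plurisubharmonic-variation argument. An alternative route, the one followed by Bando--Mabuchi in the anticanonical case and by Donaldson in general, bypasses infinite-dimensional geodesics: one quantizes, replacing $\mathcal{H}_{\om}$ by the finite-dimensional symmetric spaces $\mathscr{B}_N$ of Bergman metrics for $\mathbb{L}^k$, shows that the presence of a cscK metric forces balanced metrics to exist for $k\gg 0$, and uses that the associated finite-dimensional energies — which are geodesically convex and have the balanced metric as a critical point — are bounded below; passing to the limit $k\to\infty$ via the asymptotics relating these energies to $\nu_{\om}$ recovers the bound. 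In that approach the hard step is instead the existence and convergence of the balanced metrics.
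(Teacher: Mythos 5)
The paper does not prove this statement at all: it is imported verbatim from the literature, with the proof delegated to the cited works of Bando--Mabuchi, Donaldson, Li, and Chen--Li, so there is no ``paper's proof'' to compare against. Your sketch is a correct account of how the result is actually established, but be aware that your primary route --- $C^{1,1}$ geodesics (Chen) plus convexity of $\nu_{\om}$ along them (Berman--Berndtsson) plus the endpoint first-variation argument --- is not the argument of any of the cited references; it is the later Berman--Berndtsson proof. Your ``alternative route'' paragraph is the one that matches the citations: Bando--Mabuchi in the anticanonical case, and Donaldson's quantization through balanced metrics and the lower bound for the finite-dimensional functionals on $\mathscr{B}_k$ in general, which is also the spirit closest to this paper, since the whole strategy here is to control $\nu_{\om}$ through its restriction to Bergman metrics and Tian's density theorem. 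One caution on your main route: the step you phrase as ``the first-variation formula persists'' at $t=0$ is not a routine persistence statement, because along a weak geodesic ${\tt{Scal}}(\om_{\vp_t})$ is not defined for $t>0$ and $f(t)=\nu_{\om}(\vp_t)$ is not a priori differentiable from the formula; the actual content of Berman--Berndtsson is the inequality $f'(0^{+})\geq -\frac{1}{V}\int_X \dot{\vp}_0\,({\tt{Scal}}(\om_0)-\mu)\,\om_0^n$, proved via semicontinuity of the entropy term, and only this inequality (which suffices, since the right side vanishes at a cscK metric) is available. You do flag this as a hard imported input, so your outline has no genuine gap --- it simply rests, as it must, on substantial external theorems, exactly as the paper itself does by citation.
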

 
We recall the Aubin $J_{\om}$ functional (see \cite{aubin84}) and the associated energy $F_{\om}^o$  
\begin{align*}
& J_{\omega}(\varphi):= \frac{1}{V}\int_{X}\sum_{i=0}^{n-1}\frac{\sqrt{-1}}{2\pi}\frac{i+1}{n+1}\dl\varphi \wedge \dlb
\varphi\wedge \omega^{i}\wedge {\omega_{\varphi} }^{n-i-1}\\
\ \\
& F_{\om}^o(\varphi):=J_{\omega}(\varphi)-\frac{1}{V}\int_{X}\varphi\ \om^n \ .
\end{align*}

\begin{definition}\label{proper}(Tian \cite{tian97})
\emph{Let $(X,\om) $ be a K\"ahler manifold . The Mabuchi energy is \emph{\textbf{proper}} provided there exists constants $\varepsilon >0$ and $b$ such that for all $\varphi\in \mathcal{H}_\om$ we have }
\begin{align*}
\begin{split}
&\nu_{\om}(\vp)\geq \varepsilon  J_{\omega}(\varphi)+b  \ .
 \end{split}
\end{align*}
\end{definition}
\begin{remark}
The constant $\varepsilon$ in this definition is related to the ``stability exponent'' $m$ by $\varepsilon m=1$.
\end{remark}

Let $(X^n,\mathbb{L})$ be a polarized manifold. Let $h$ be a smooth Hermitian metric on $\mathbb{L}$ with positive curvature $\om$. Choose $k$ large enough so that there is an embedding
\begin{align*}
\iota_k:X\ra \mathbb{P}(H^0(X , \mathbb{L}^k)^*) \ .
\end{align*}
We will always assume that the embedding is given by a \emph{unitary} basis of sections $\{S_i \}$. Similarly we outfit $H^0(X , \mathbb{L}^k)$ with the usual $L^2$ Hermitian metric. We let $\om_{FS}$ denote the corresponding Fubini-Study K\"ahler metric on the (dual) projective space of sections. Then
\begin{align*}
\iota^*_{k}\om_{FS}|_{\iota_k(X)}=k\om_h+\frac{\sqrt{-1}}{2\pi}\dl\dlb\log\left(\sum^{N_k}_{i=0}|S_i|^2 \right)
\end{align*}
Let $G=SL(H^0(X , \mathbb{L}^k))$ , then $\sigma\in G$ acts on the sections by
\begin{align*}
\sigma\cdot S_{i}=\sum_{0\leq j\leq N_k}\sigma_{ij}S_j \ .
\end{align*}
Define 
\begin{align*}
\Psi_{\sigma}:=\log \sum_{0\leq i\leq N_k+1}|\sigma\cdot S_i|^2\ .
\end{align*}
The \emph{Bergman metrics} of level $k$ are given by
\begin{align*}
\mathscr{B}_k:=\{ \frac{1}{k}\Psi_{\sigma} \ |\ \sigma\in SL(N_k+1,\mathbb{C}) \} \subset \mathcal{H}_{\om}\ .
\end{align*}
 A key ingredient in this paper is the following result of Tian \cite{tianberg} .
 \begin{theorem} (Tian's Thesis)
 The spaces $\mathscr{B}_k$ are dense in the $\tt{C}^2$ topology
 \begin{align*}
 \overline{\bigcup _{k}\mathscr{B}_k}=\mathcal{H}_{\om} \ .
 \end{align*}
\end{theorem}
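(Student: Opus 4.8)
The plan is to reduce the density assertion to Tian's asymptotic expansion of the Bergman kernel and then read the approximation off from it. Fix $\vp\in\mathcal{H}_{\om}$ and let $h_{\vp}:=he^{-\vp}$, a Hermitian metric on $\mathbb{L}$ with curvature $\om_{\vp}:=\om+\frac{\sqrt{-1}}{2\pi}\dl\dlb\vp>0$. For every $k$ large enough to embed, give $H^{0}(X,\mathbb{L}^{k})$ the $L^{2}$ inner product $\langle s,t\rangle_{k}:=\int_{X}\langle s,t\rangle_{h_{\vp}^{k}}\,\om_{\vp}^{n}$, pick a $\langle\,,\,\rangle_{k}$-orthonormal basis $\{S^{(k)}_{i}\}_{i=0}^{N_{k}}$, and set
\begin{align*}
\rho_{k}(\vp):=\sum_{i=0}^{N_{k}}|S^{(k)}_{i}|^{2}_{h_{\vp}^{k}}\in C^{\infty}(X)\ .
\end{align*}
This basis differs from the fixed unitary basis $\{S_{i}\}$ used to define $\mathscr{B}_{k}$ by a matrix in $GL(N_{k}+1,\mathbb{C})$, which may be taken in $SL(N_{k}+1,\mathbb{C})$ since rescaling only shifts $\Psi_{\sigma}$ by a constant; hence the Kodaira embedding $\iota_{k}$ it defines satisfies
\begin{align*}
\tfrac{1}{k}\iota_{k}^{*}\om_{FS}=\om_{\vp}+\tfrac{1}{k}\tfrac{\sqrt{-1}}{2\pi}\dl\dlb\log\rho_{k}(\vp)=\om+\tfrac{\sqrt{-1}}{2\pi}\dl\dlb\bigl(\tfrac{1}{k}\Psi_{\sigma}\bigr)
\end{align*}
for a suitable $\sigma\in SL(N_{k}+1,\mathbb{C})$. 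Thus $\tfrac{1}{k}\iota_{k}^{*}\om_{FS}$ is a level-$k$ Bergman metric; relative to $\om$ its potential is $\vp_{k}:=\vp+\tfrac{1}{k}\log\rho_{k}(\vp)$, which agrees up to an additive constant with an element of $\mathscr{B}_{k}$ (an immaterial ambiguity — on $\mathcal{H}_{\om}$ the relevant functionals are constant-invariant, and if one insists it is absorbed by running the construction on a suitable constant shift of $\vp$). The problem is reduced to showing $\vp_{k}\to\vp$ in $C^{2}$.

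The analytic input is Tian's expansion of the density of states: there is a constant $C=C(X,\om_{\vp})$ with $\|\,k^{-n}\rho_{k}(\vp)-1\,\|_{C^{2}(X)}\le Ck^{-1/2}$ for all large $k$ (later sharpened by Zelditch, Catlin and Lu to a full $C^{\infty}$ expansion $\rho_{k}(\vp)=k^{n}+a_{1}(\vp)k^{n-1}+\cdots$ with coefficients universal in the curvature of $\om_{\vp}$; only the $C^{2}$ bound is needed here). Granting it, write
\begin{align*}
\tfrac{1}{k}\log\rho_{k}(\vp)=\tfrac{n\log k}{k}+\tfrac{1}{k}\log\bigl(k^{-n}\rho_{k}(\vp)\bigr)\ ;
\end{align*}
since $\log$ is smooth near $1$ the second term tends to $0$ in $C^{2}(X)$ and the first is a constant tending to $0$, so $\vp_{k}\to\vp$ in the $C^{2}$ topology. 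As $\vp$ was arbitrary this gives $\mathcal{H}_{\om}\subseteq\overline{\bigcup_{k}\mathscr{B}_{k}}$, while $\bigcup_{k}\mathscr{B}_{k}\subseteq\mathcal{H}_{\om}$ is immediate because each $\tfrac{1}{k}\iota_{k}^{*}\om_{FS}$ is a smooth positive $(1,1)$-form representing $[\om]=c_{1}(\mathbb{L})$.

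It remains to indicate the proof of the expansion, which is the real work and the main obstacle. Fix $x\in X$, take normal coordinates for $\om_{\vp}$ at $x$ and a local frame $e$ of $\mathbb{L}$ with $|e|^{2}_{h_{\vp}}=e^{-\psi}$, $\psi(x)=0$, $d\psi(x)=0$ and $\frac{\sqrt{-1}}{2\pi}\dl\dlb\psi(x)=\mathrm{Id}$. The pointwise estimate $\rho_{k}(\vp)(x)=k^{n}(1+o(1))$ follows from two matching bounds. For the upper bound one combines the extremal description $\rho_{k}(x)=\sup_{0\ne s}|s(x)|^{2}_{h_{\vp}^{k}}/\|s\|_{k}^{2}$ with the plurisubharmonicity of $|s|^{2}_{h_{\vp}^{k}}e^{k\psi}$, applying the sub-mean-value inequality on a ball of radius $\sim k^{-1/2}$; after rescaling $z\mapsto z/\sqrt{k}$ this becomes the Bargmann--Fock bound $|f(0)|^{2}\le (k/\pi)^{n}\int_{\mathbb{C}^{n}}|f|^{2}e^{-k|z|^{2}}\,dV$, which yields the sharp constant. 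For the lower bound one constructs a holomorphic peak section of $\mathbb{L}^{k}$ modeled on $e^{-k|z|^{2}/2}e^{\otimes k}$: cut this Gaussian model off near $x$ and correct the cutoff using H\"ormander's $L^{2}$ estimate for $\dlb$ on $(X,\om_{\vp})$ twisted by $\mathbb{L}^{k}$ (its curvature $k\om_{\vp}+\mathrm{Ric}(\om_{\vp})$ being positive for $k\gg0$), the correction being exponentially small because $\dlb$ of the cutoff is supported where $e^{-k\psi}$ is. Promoting this $C^{0}$ statement to the $C^{2}$ statement used above is the delicate step: one constructs, by the same $\dlb$-method, peak sections realizing prescribed $1$- and $2$-jets at $x$, shows that their Gram matrix agrees with the Bargmann--Fock one up to $o(1)$, and then differentiates $\rho_{k}(y)=\sum_{i}|S^{(k)}_{i}(y)|^{2}$ twice, computing the sums through this quasi-orthonormal jet-adapted family; the crux is uniformity of all the error terms as $x$ ranges over $X$ and $k\to\infty$. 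I would carry this out by the peak-section construction rather than the Szeg\H{o}-kernel parametrix, since it keeps the analysis on $(X,\om_{\vp})$ and uses only H\"ormander's estimate.
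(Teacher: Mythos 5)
The paper offers no proof of this statement: it is quoted verbatim from Tian's thesis (\cite{tianberg}), and your argument is in substance the route of that source — reduce density to the asymptotics of the density function $\rho_k(\vp)$ and prove the expansion by the extremal characterization plus peak sections constructed with H\"ormander's $\dlb$-estimate. Your reduction ($\frac{1}{k}\iota_k^*\om_{FS}=\om_{\vp}+\frac{1}{k}\frac{\sqrt{-1}}{2\pi}\dl\dlb\log\rho_k(\vp)$, hence the potential $\vp+\frac{1}{k}\log\rho_k(\vp)$, hence $C^2$-convergence once $k^{-n}\rho_k(\vp)\ra 1$ in $C^2$) is correct, and the sketch of the kernel asymptotics is the standard one; as a blind reconstruction of the cited theorem it is essentially faithful.

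The one genuine wrinkle is the $SL(N_k+1,\mathbb{C})$ normalization in the paper's definition of $\mathscr{B}_k$. If $A\in GL(N_k+1,\mathbb{C})$ is the change of basis from the fixed unitary basis to your $L^2(h_\vp^k,\om_\vp^n)$-orthonormal basis and $\sigma=A/\det(A)^{1/(N_k+1)}$, then the element of $\mathscr{B}_k$ you actually produce is $\frac{1}{k}\Psi_\sigma=\vp+\frac{1}{k}\log\rho_k(\vp)-\frac{2}{k(N_k+1)}\log|\det A|$, and since $|\det A|^2$ is the reciprocal of the Gram determinant of the old basis in the new inner product, the subtracted constant does \emph{not} tend to $0$: it converges to a nonzero constant depending on $\vp$ (essentially the normalized Monge--Amp\`ere energy). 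Your proposed repair — running the construction on a constant shift of $\vp$ — does nothing, because replacing $\vp$ by $\vp+a$ rescales the orthonormal basis by $e^{ka/2}$ and leaves the $SL$-representative $\sigma$, hence $\frac{1}{k}\Psi_\sigma$, unchanged. So the argument proves density modulo additive constants, equivalently density of the Bergman \emph{metrics}, which is Tian's actual theorem; indeed, by the paper's own Lemma \ref{tian'slemma} together with $||\sigma||^2\geq N_k+1$ for $\sigma\in SL(N_k+1,\mathbb{C})$, the averages $\int_X\frac{\Psi_\sigma}{k}\frac{\om^n}{V_o}$ are uniformly bounded below, so the literal set equality with all of $\mathcal{H}_{\om}$ has to be read modulo constants anyway. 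For the use made of the theorem in the last section this is harmless, since $\nu_{\om}$ and $J_{\om}$ are invariant under adding constants to the potential, but you should state the conclusion in that normalized form rather than claim the constant can be absorbed.
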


Now we are prepared to establish that the asymptotic stability of $(X,\mathbb{L})$ is equivalent to the existence of a cscK metric $\om\in{\tt{C}}_1(\mathbb{L})$ provided that $(\dagger\dagger)$ holds.

We begin with the following lemma, which was shown to the author by Gang Tian.
\begin{lemma}\label{tian'slemma}
There is a uniform constant $C$ such that for all sufficiently large $k\in \mathbb{N}$ we have
\begin{align*}
C+\frac{1}{k}\log\left(\frac{||\sigma||^2}{N_k+1} \right)\leq \int_X\frac{\Psi_{\sigma}}{k}\frac{\om^n}{V_o} \quad  .
\end{align*}
\end{lemma}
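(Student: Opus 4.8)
The plan is to diagonalize $\sigma$ via its singular values, to apply Jensen's inequality with a weighting tuned to those singular values so that all dependence on $\sigma$ collapses to the single term $\log\|\sigma\|^{2}$, and then to control the one remaining term by a potential-theoretic estimate for quasi-plurisubharmonic functions. Throughout, $\{S_{i}\}$ denotes the fixed unitary frame and $|{\cdot}|$ the pointwise norm in $h^{k}$.

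First I would reduce to the diagonal case. In the singular value decomposition $\sigma=UDV$ with $U,V$ unitary and $D=\mathrm{diag}(d_{0},\dots,d_{N_{k}})$, $d_{i}>0$, the left factor $U$ leaves $\sum_{i}|\sigma\cdot S_{i}|^{2}$ unchanged while $V$ merely replaces $\{S_{i}\}$ by another unitary frame $\{\widetilde S_{i}\}$. Writing $a_{i}:=d_{i}^{2}$, one has $\|\sigma\|^{2}=\sum_{i}a_{i}$ and, pointwise on $X$,
\begin{align*}
\Psi_{\sigma}=\log\sum_{i}a_{i}\,|\widetilde S_{i}|^{2}\ .
\end{align*}

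Next comes the key step. Applying concavity of the logarithm (Jensen's inequality) with the probability weights $c_{i}:=a_{i}/\sum_{j}a_{j}$, for which $a_{i}/c_{i}=\|\sigma\|^{2}$ for every $i$, gives the pointwise bound
\begin{align*}
\Psi_{\sigma}(x)=\log\Big(\sum_{i}c_{i}\,\|\sigma\|^{2}\,|\widetilde S_{i}(x)|^{2}\Big)\ \geq\ \log\|\sigma\|^{2}+\sum_{i}c_{i}\log|\widetilde S_{i}(x)|^{2}\ .
\end{align*}
The weighting is chosen precisely so that the $a_{i}$-dependence collapses to the single term $\log\|\sigma\|^{2}$; with uniform weights one would instead lose all trace of $\|\sigma\|^{2}$. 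Integrating against $\om^{n}/V_{o}$ and using $\sum_{i}c_{i}=1$,
\begin{align*}
\int_{X}\Psi_{\sigma}\,\frac{\om^{n}}{V_{o}}\ \geq\ \log\|\sigma\|^{2}+\sum_{i}c_{i}\,m_{i}\ \geq\ \log\|\sigma\|^{2}+\min_{i}m_{i}\ ,\qquad m_{i}:=\int_{X}\log|\widetilde S_{i}|^{2}\,\frac{\om^{n}}{V_{o}}\ .
\end{align*}

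The analytic heart, and the step I expect to be the main obstacle, is a uniform lower bound $m_{i}\geq -kC_{0}'$ valid for every holomorphic section of unit $L^{2}$ norm and all large $k$. For such a section $S$ the function $u:=\log|S|^{2}$ satisfies $\frac{\sqrt{-1}}{2\pi}\dl\dlb u=-k\om_{h}+[\mathrm{div}(S)]\geq -k\om_{h}$, so $v:=k^{-1}\big(u-\log\sup_{X}|S|^{2}\big)$ is $\om_{h}$-plurisubharmonic with $\sup_{X}v=0$. The family of such $v$ is compact in $L^{1}(X)$, whence $\int_{X}v\,\om^{n}/V_{o}\geq -C_{0}$ for a constant $C_{0}=C_{0}(X,\om_{h})$; combining this with the trivial inequality $\sup_{X}|S|^{2}\geq V_{o}^{-1}\int_{X}|S|^{2}\om^{n}=V_{o}^{-1}$ yields $m_{i}\geq -kC_{0}'$. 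Feeding this into the previous display and discarding the nonnegative term $\log(N_{k}+1)$ in $\log\|\sigma\|^{2}=\log\frac{\|\sigma\|^{2}}{N_{k}+1}+\log(N_{k}+1)$, then dividing by $k$, produces the lemma with $C:=-C_{0}'$. The one nonroutine ingredient is this $L^{1}$-compactness of normalized $\om_{h}$-plurisubharmonic functions; the rest is the singular-value reduction and the tailored Jensen inequality, whose sole purpose is to manufacture the factor $\log\|\sigma\|^{2}$.
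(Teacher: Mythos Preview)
Your proof is correct, but it takes a genuinely different route from the paper's. The paper does not diagonalize $\sigma$ at all: it simply observes from unitarity of the basis that $\sum_i\|\sigma\cdot S_i\|^2=\|\sigma\|^2$, so by pigeonhole some index $j$ has $\|\sigma\cdot S_j\|^2\geq \|\sigma\|^2/(N_k+1)$; then $\Psi_\sigma\geq \log|\sigma\cdot S_j|^2$ pointwise, and the whole problem reduces to a uniform lower bound on $\tfrac{1}{k}\int_X\log|T|^2\,\om^n/V$ for a single unit-norm section $T$. You achieve the same reduction by the SVD plus a cleverly weighted Jensen inequality, which manufactures the factor $\log\|\sigma\|^2$ directly and lets you simply discard the nonnegative term $\log(N_k+1)$, whereas in the paper the $(N_k+1)$ denominator appears naturally from the pigeonhole step.

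The analytic core is the same in both arguments: a uniform bound $\tfrac{1}{k}\int_X\log|S|^2_{h^k}\,\om^n\geq -C$ for unit $L^2$ sections. The paper phrases this via Tian's alpha invariant (uniform integrability of $|T|^{-2\beta/k}$ followed by Jensen), while you phrase it via $L^1$-compactness of $\om_h$-psh functions normalized to have $\sup=0$; these are essentially interchangeable inputs. Your route is a bit more elaborate in the algebraic reduction but avoids singling out a section; the paper's route is shorter and makes the role of $N_k+1$ transparent.
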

\begin{proof}
If $||\sigma||^2:={\tt{Trace}}(\sigma\sigma^*)$  then we observe that the unitarity of the basis gives
\begin{align*}
 \sum_{0\leq i\leq N_k}\frac{||\sigma\cdot S_{i}||^2}{||\sigma||^2}=1 \ .
\end{align*}
Therefore there is an index $j$ such that
\begin{align*}
||\sigma\cdot S_{j}||^2\geq \frac{||\sigma||^2}{N_k+1} \ .
\end{align*}
Define $$T_j^{\sigma}:=\frac{\sigma\cdot S_{j}}{||\sigma\cdot S_{j}||} \ .$$ Let $\alpha(\mathbb{L})$ be Tian's alpha invariant, and choose any $0<\beta < 
\alpha(\mathbb{L})$ then there exists a uniform constant $C(\beta)>0$ such that
\begin{align*}
\int_X\left(\frac{1}{|T_j^{\sigma}|^2}\right)^{\frac{\beta}{k}}\frac{\om^n}{V} \leq C(\beta) \ .
\end{align*}
Therefore we have that
\begin{align*}
-\frac{\beta}{k}\int_{X}\left(\log|\sigma\cdot S_{j}|^2-\log||\sigma\cdot S_{j}||^2\right)\frac{\om^n}{V} \leq \log C(\beta) \ .
\end{align*}
Therefore
\begin{align*}
\frac{1}{k}\log\left(\frac{||\sigma||^2}{N_k+1} \right)-\frac{1}{\beta}\log  C(\beta)\leq \frac{1}{k}\int_X\log|\sigma\cdot S_j|^2\frac{\om^n}{V}
\leq \frac{1}{k}\int_X\log \sum_{0\leq i\leq N_k+1}|\sigma\cdot S_i|^2\frac{\om^n}{V} \ .
\end{align*}
\end{proof}

 We need to compare the Mabuchi energy and the Aubin energy of the reference metric $\om$ and the restrictions of the Fubini-Study metrics coming from the large projective embeddings. It is easy to see, but absolutely crucial for our argument, that $\nu_{\om}$ does not scale but $F_{\om}^o$ does scale as we pass between $\om$ and ${\om_{FS}|_{\iota_k(X)}}$. We collect the precise comparisons below, where $o(1)$ denotes any quantity that converges to 0 as $k\ra \infty$. In fact all of the $o(1)$'s below have the form $O(\frac{\log(k)}{k})$. 
  
 \begin{align*}
 & \nu_{\om}(\frac{\Psi_{\sigma}}{k})=\nu_{\om_{FS}|_{\iota_k(X)}}(\varphi_{\sigma})+o(1) \ .\\
 \ \\
 &J_{\om}(\frac{\Psi_{\sigma}}{k})=\frac{1}{k}J_{\om_{FS}|_{\iota_k(X)}}(\varphi_{\sigma})+o(1) \ .\\
 \ \\
 & \int_{X}\frac{\Psi_{\sigma}}{k}\frac{\om^n}{V_o}=\frac{1}{V}\int_{\iota_k(X)}\frac{\varphi_{\sigma}}{k}\om^n_{FS}+o(1) \ .
 \end{align*}

Proposition (\ref{heart}) shows that asymptotic stability of $(X,\mathbb{L})$ is equivalent to
\begin{align}\label{comparison}
km\left(\log\frac{||\sigma\cdot\Delta||^2}{||\Delta||^2}-\log\frac{||\sigma\cdot R||^2}{||R||^2}\right)\geq q\log\frac{||\sigma||^2}{N_k+1}-\log\frac{||\sigma R||^2}{||R||^2}-Ck^{2n+1}  
\end{align}
where $k>>0$ , $m$ is a fixed positive integer, and $q=\deg(\Delta_X)\deg(R_X)$. Keep in mind that the norms $||\cdot||$ denote the standard $L^2$ norms (induced by $h$) on the representations and $||\sigma||^2=\tt{Trace}(\sigma\sigma^*)$ .  

The basic comparison theorem from the introduction (Theorem A from \cite{paul2012}) gives
\begin{align*}
d^2(n+1)\nu_{\om_{FS}}(\vps)+\delta(\sigma)=\log\frac{||\sigma\cdot\Delta||^2}{||\Delta||^2}-\log\frac{||\sigma\cdot R||^2}{||R||^2} \ .
\end{align*}
Therefore the left hand side of (\ref{comparison}) becomes
\begin{align*}
& km\left(d^2(n+1)\nu_{\om}(\frac{\Psi_{\sigma}}{k}) +d^2(n+1)o(1)+\delta(\sigma)\right) \\
\ \\
&=kmd^2(n+1)\left(\nu_{\om}(\frac{\Psi_{\sigma}}{k}) + o(1)+\frac{\delta(\sigma)}{d^2(n+1)}\right)\\
\ \\
&\sim kmd^2(n+1)\left(\nu_{\om}(\frac{\Psi_{\sigma}}{k}) + o(1)+C\right) \ .
\end{align*}
Above we used our assumption that $(X,\mathbb{L})$ satisfies condition $(\dagger \dagger)$:
\begin{align*}
\frac{|\delta(\sigma)|}{ d^2}=O(1) \ .
\end{align*} 
 
On the right hand side of $(*)$ we have, by Tian's lemma and known results \footnote{ See \cite{zhang} or \cite{gacms} . } concerning the relationship between $R_X$ and $F^o_{\om_{FS}|_{\iota_k(X)}}$ , the following expression
\begin{align*}
&q\left(\log\frac{||\sigma||^2}{N_k+1}-\frac{1}{\deg(R)}\log\frac{||\sigma R||^2}{||R||^2}\right)-Ck^{2n+1}\\
\ \\
&=q\left( \log\frac{||\sigma||^2}{N_k+1}+F_{\om_{FS}|_{\iota_k(X)}}^{o}(\vps)+\frac{(h_{F}(\sigma X)-h_{F}(X))}{\deg(R)}\right)-Ck^{2n+1} \\
\ \\
& \sim q\left( -\log(N_k+1)+J_{\om_{FS}|_{\iota_k(X)}}(\vps) +\frac{(h_{F}(\sigma X)-h_{F}(X))}{\deg(R)} \right)-Ck^{2n+1} \\
\ \\
&= q\left( -\log(N_k+1)+kJ_{\om}(\frac{\Psi_{\sigma}}{k} ) +ko(1)+\frac{(h_{F}(\sigma X)-h_{F}(X))}{\deg(R)}\right)-Ck^{2n+1}\\
\end{align*}

Now divide both sides by $k^{2n+1}$ to get
\begin{align*}
\nu_{\om}(\frac{\Psi_{\sigma}}{k})+o(1) \geq C_1 J_{\om}(\frac{\Psi_{\sigma}}{k} )-C_1\frac{\log(N_k+1)}{k}+\frac{C_1(h_{F}(\sigma X)-h_{F}(X))}{k\deg(R)} -C_2 \ .
\end{align*} 
We remark that $C_{i}=C_i(n,\om,m)$ .

Recall that the heights satisfy at worst a $d\log(d)$ bound, where $d=k^n$. This gives 
\begin{align*}
\frac{h_{F}(\sigma X)-h_{F}(X)}{k\deg(R)}=O\left(\frac{\log(k)}{k}\right) \ .
\end{align*}
Now we may choose any $\varphi\in\mathcal{H}_{\om}$ and any sequence $$\mathscr{B}_k\ni\frac{\Psi_{\sigma}}{k}\xrightarrow{C^2}\varphi \quad \mbox{as} \ k\ra \infty$$ to see that the Mabuchi energy is proper.

Running this backwards shows that properness of the Mabuchi energy implies asymptotic stability of the polarized manifold $(X,\mathbb{L})$.The
 equivalence between asymptotic semistability and a lower bound on the Mabuchi energy is proved in much the same way. 
 
\bibliographystyle{plain} 
\bibliography{ref.bib}
\end{document}